\newcommand{\rev}[1]{{\color{black}#1}} %red
\renewcommand{\L}{\mathcal{B}}
\newcommand{\norm}[1]{\ensuremath{\Arrowvert #1 \Arrowvert}}
\newcommand{\inner}[2]{\langle #1, #2 \rangle}
\newcommand{\tol}{{\tt Tol }}
\newcommand{\R}{\mathds{R}}
\renewcommand{\Re}{\R}
\newcommand{\dom}[1]{\mathrm{\mathcal{D}om}(#1)} %domain of an operator
\newcommand{\col}[1]{\left\{#1\right\}}
\newcommand{\ind}{{\bf i}}
\newcommand{\grO}{\mathrm{O}}
\newtheorem{remark}{Remark}
\newtheorem{theorem}{Theorem}
\newtheorem{proposition}{Proposition}
\newtheorem{definition}{Definition}
\title{Computing Wasserstein Barycenter via operator splitting: the method of averaged marginals}
\date{}
\author[1,3]{D. Mimouni\thanks{daniel.mimouni@ifpen.fr}}
\author[1]{P. Malisani\thanks{paul.malisani@ifpen.fr}}
\author[2]{J. Zhu\thanks{jiamin.zhu@ifpen.fr}}
\author[3]{W. de Oliveira\thanks{welington.oliveira@minesparis.psl.eu}}
\affil[1]{\footnotesize IFP Energies nouvelles, Dpt. Applied Mathematics, 1-4 Av Bois-Préau, 92852 Rueil-Malmaison}
\affil[2]{\footnotesize IFP Energies nouvelles, Dpt. Control and Signal Processing, 1-4 Av Bois-Préau, 92852 Rueil-Malmaison}
\affil[3]{\footnotesize Mines Paris, Center for Applied Mathematics, 1, rue Claude Daunesse, F-06904 Sophia Antipolis }
\begin{document}

\maketitle

% REQUIRED
\begin{abstract}
The Wasserstein barycenter (WB) is an important tool for summarizing sets of probability measures. It finds applications in applied probability, clustering, image processing, etc. When the measures' supports are finite, computing a \rev{(balanced)} WB can be done by solving a linear optimization problem whose dimensions generally exceed standard solvers' capabilities. 
\rev{In the more general setting where measures have different total masses, we propose a convex nonsmooth optimization formulation for the so-called unbalanced WB problem. Due to their colossal dimensions, we introduce a decomposition scheme based on the Douglas-Rachford splitting method that can be applied to both balanced and unbalanced WB problem variants.}
Our algorithm, which has the interesting interpretation of being built upon averaging marginals, operates a series of simple (and exact) projections that can be parallelized and even randomized, making it suitable for large-scale datasets. Numerical comparisons against state-of-the-art methods on several data sets from the literature illustrate the method's performance.
\end{abstract}

\section{Introduction} \label{introduction}
In applied probability, stochastic optimization, and data science, a crucial aspect is the ability to compare, summarize, and reduce the dimensionality of empirical/discrete measures. Since these tasks rely heavily on pairwise comparisons of measures, it is essential to use an appropriate metric for accurate data analysis. 
Different metrics define different barycenters of a set of measures:  %. More precisely, 
a barycenter is a mean element that minimizes the (weighted) sum of all its \rev{square} distances to the set of target measures. 
When the chosen metric is the optimal transport one, and there is mass equality between the measures, the underlying barycenter is denoted by (balanced) Wasserstein Barycenter (WB).

The optimal transport metric defines the so-called Wasserstein distance (also known as Mallows or Earth Mover's distance),  a popular choice in statistics, machine learning, and stochastic optimization \cite{Peyre_Cuturi_2019,Oliveira_CEPEL_2009,Pflug_Pichler_2014}.
The Wasserstein distance has several valuable theoretical and practical properties \cite{Villani_2009,Rubner_2000} that are transferred to WBs \cite{Carlier,Cuturi_Doucet_14,Puccetti,Peyre_Cuturi_2019}.
Indeed, thanks to the Wasserstein distance, one key advantage of WBs is their ability to preserve the underlying geometry of the data, even in high-dimensional spaces. This fact makes WBs particularly useful in image processing, where datasets often contain many pixels and complex features that must be accurately represented and analyzed \cite{simon2020barycenters,tartavel2016wasserstein}. 
% For example, Wasserstein barycenters can be used in image morphing or segmentation, which involves dividing an image into multiple regions based on the similarity of the pixels within each region. By computing a WB of a set of probability measures defined over the image space, it is possible to accurately cluster similar pixels and identify the different regions of an image \cite{simon2020barycenters}.
% Image restoration is another example: WBs can remove noise and artifacts from an image while preserving important features and structures \cite{tartavel2016wasserstein}. 

Being defined by the Wasserstein distance, WBs are challenging to compute. The Wasserstein distance is computationally expensive because, to compute an optimal transport plan, one needs to cope with a large linear program (LP) problem that has no analytical solution and cubic worst-case complexity\footnote{More precisely, $O(S^3log(S))$, with $S$ the size of the input data.} \cite{J.Ye}. The situation becomes even worse for computing a WB
\rev{of a set of finitely many discrete measures as the problem involves several transport plans \cite{Carlier}. This problem can be written as LP \cite{Carlier_Oberman_Oudet_2015,Anderes_Borgwardt_Miller_2016}, whose size becomes astronomical as it scales exponentially in the number of measures, exceeding thus the capabilities of standard LP solvers even for a small number of measures \cite{Carlier_Oberman_Oudet_2015,Anderes_Borgwardt_Miller_2016,Borgwardt_Patterson_2021}. 
%of a set of finitely many discrete measures: the problem is formulated as a nonconvex optimization involving several transport plans \cite{Carlier}.
%Reformulating such a problem as an LP is possible, provided the quadratic Wasserstein distance is employed \cite[Prop. 1]{Anderes_Borgwardt_Miller_2016}. However, the LP's size becomes astronomical as it scales exponentially in the number of measures, generally exceeding the capabilities of standard LP solvers \cite{Carlier}. 
For this reason, significant effort has been made to reduce the LP's size and design specialized solvers 
%that exploit the problem's structure 
\cite{Carlier_Oberman_Oudet_2015,Anderes_Borgwardt_Miller_2016,Borgwardt_Patterson_2020,Altschuler_Boix-Adsera_2020}.
In particular, the work
%\cite{Carlier_Oberman_Oudet_2015} proposes a simpler linear programming reformulation of the WB problem that leads to an LP that scales linearly with the number of measures. Although the resulting LP is smaller than the WB problem, it still suffers from heavy computation time and memory consumption \cite{Puccetti}. The paper 
\cite{Borgwardt_Patterson_2020} proposes  reduced LP models that exploit data structure. Although significantly smaller than the original LP problem defining WBs, those models are in general large scale and still hard to solve. 
The work \cite{Altschuler_Boix-Adsera_2020}
leverages techniques from computational geometry and combinatorial optimization to propose a specialized LP solver for computing WBs. 
The approach, which works on the dual problem and implements a separation oracle,
is not efficient beyond moderate-scale inputs \cite[\S\,5]{Altschuler_Boix-Adsera_2020}. Indeed, a WB cannot be computed in time polynomial in the number
of measures, (maximum) support size, and dimension \cite{Altschuler_Boix-Adsera_2022}. 
%
%The approach considers a multimarginal optimal transport (MOT) reformulation of the WB problem and implements a separation oracle for the LP dual of MOT. However, as claimed by the authors (see \cite[\S\,5]{Altschuler_Boix-Adsera_2020}), their approach is not efficient beyond moderate-scale inputs. 

Given the difficulty of computing exact (free-support) WBs, much research has focused on inexact approaches. A vast body of literature focuses on computing inexact WBs, either by employing approximate LP approaches as in \cite{Puccetti,Borgwardt_2022,Lindheim_2023}, or by restricting the support of the WB to a fixed set, the so-called fixed-support approaches \cite{Cuturi_Doucet_14,J.Ye,Peyre_Cuturi_2019}. 
These techniques often employ a block-coordinate scheme consisting of two steps, first fixing the support and optimizing over the masses, then fixing the masses and optimizing over the support (of a given size).
The first of these steps is an LP problem with the same structure as the exact (free-support) WB's LP formulation discussed above. The only difference is the LP's size, as fixing the support reduces the problem significantly. The second step in the block-coordinate scheme has a straightforward solution, provided the quadratic Wasserstein distance is employed. 

Hence, whether an exact or inexact approach is employed to compute (approximate) a WB, one invariably has to face a large-scale LP of the form (see equations~\eqref{eq:Pi} and \eqref{eq:L} for details)
%a specific structure. 
\begin{equation}\label{LPgen}
\min_{\pi \in \L} \; \sum_{m=1}^M \inner{c^{(m)}}{\pi^{(m)}}  \quad
         \mbox{s.t.}\quad  \pi^{(m)} \in \Pi^{(m)},\quad m=1,\ldots,M,
%    \left\{
%    \begin{array}{lllll}
%        \displaystyle \min_{\pi }&  \sum_{m=1}^M \inner{c^{(m)}}{\pi^{(m)}}  \\
%         \mbox{s.t.}& \pi^{(m)} \in \Pi^{(m)},\quad m=1,\ldots,M\\
%                    & \pi \in \L,
%    \end{array}
%    \right.
\end{equation}
where $M$ stands for the number of discrete measures, $c$ for the transportation costs, $\Pi^{(m)}$ represents a polytope containing measures with given marginal, and $\L$ symbolizes a linear subspace.
While exact techniques usually build upon linear programming techniques, inexact approaches tackle~\eqref{LPgen} via reformulations based  on an entropic regularization \cite{Cuturi_Doucet_14,J.Ye,Gramfort_Pyre_Cuturi_2015,Cuturi_Peyre_2016,IBP,Peyre_Cuturi_2019}.
Indeed, the work \cite{Cuturi_Doucet_14} proposes to compute a WB inexactly by decomposing \eqref{LPgen} along the measures and then regularizing the resulting optimal transportation problems with an entropy-like function.
A projected subgradient method gives rise to a minimization scheme with decomposition to deal with the high dimensions of the LP.
The regularization technique} allows one to employ the celebrated Sinkhorn algorithm \cite{Sinkhorn_1974,Cuturi_2013}, which  has a simple closed-form expression and can be implemented  efficiently using only matrix operations.
Furthermore, this technique opened the way to the \emph{Iterative Bregman Projection} (IBP) method proposed in \cite{IBP}. 
IBP is highly memory efficient for distributions with a shared support set and is considered to be one of the most effective methods to tackle fixed-support WB problems.
However, as IBP works with an approximating model and fixed support, the method falls in the class of inexact approaches.

Another approach fitting into the category of inexact methods
has been recently proposed in \cite{J.Ye}, which uses the same type of regularization as IBP but decomposes the problem into a sequence of smaller subproblems with straightforward solutions. More specifically, the approach in \cite{J.Ye} is a modification (tailored to the WB problem) of the \emph{Bregman Alternating Direction Method of Multipliers} (B-ADMM) of \cite{B-ADMM}. 
The modified B-ADMM  has been shown to compute promising results for sparse support measures and therefore is well-suited in some clustering applications. However, 
the theoretical convergence properties of the modified B-ADMM algorithm are not well understood and the approach should be considered as a heuristic.
In the same vein, the work \cite{Ye_ADMM} proposes to address the WB problem via the standard ADMM algorithm, which decomposes the problem into smaller and simpler subproblems. As mentioned by the authors in their subsequent paper \cite{J.Ye}, the numerical efficiency of the standard ADMM is still inadequate for large datasets.

\rev{To cope with the challenge of solving LPs of the form~\eqref{LPgen} resulting from computing exact (free-support) or inexact (fixed-support) WBs,} we propose a new algorithm based on the celebrated 
Douglas-Rachford splitting operator method (DR) \cite{Douglas_Rachford_1956,Eckstein_Bertsekas_1992,Accelerated_DR_2020}. Our proposal, which exploits the problem structure for decomposition, is denoted by \emph{Method of Averaged Marginals} (MAM) as at every iteration, 
the algorithm computes a barycenter approximation by 
averaging marginals issued by transportation plans that are 
updated independently, in parallel, and even randomly if necessary. 
Accordingly, the algorithm operates a series of simple and exact projections that can be carried out in parallel and even randomly. \rev{These compelling features allow for considering data sets beyond moderate sizes in the free-support setting and attaining more accurate results than entropy-based methods usually get in the fixed-support case.
Furthermore, MAM can be applied to a more general setting where measures have different total masses.}  %, i.e., the so-called unbalanced Wassertein baryccenter (UWB). 

All the methods mentioned in the above references deal exclusively with sets of probability measures because WBs are limited to measures with equal total masses. A tentative way to circumvent this limitation is to normalize general positive measures to compute a standard (balanced) WB. However, such a naive strategy is generally unsatisfactory and limits the use of WBs in many real-life applications such as logistics, medical imaging, and others coming from the field of biology \cite{Heinemann_Klatt_Munk_2022,Sejourne_Peyre_Vialard_2023}. 
Consequently, the concept of WB has been generalized to summarize such more general measures. Different generalizations of the WB exist in the literature, and they are based on variants of \emph{unbalanced optimal transport problems} that define a distance between general non-negative, finitely supported measures by allowing for mass creation and destruction \cite{Heinemann_Klatt_Munk_2022}. Essentially, such generalizations, known as unbalanced Wasserstein barycenters (UWBs), depend on how one chooses to relax the marginal constraints. In the review paper \cite{Sejourne_Peyre_Vialard_2023} and references therein, marginal constraints are moved to the objective function with the help of divergence functions. Differently, in \cite{Heinemann_Klatt_Munk_2022} the authors replace the marginal constraints with sub-couplings and penalize their discrepancies. It is worth mentioning that UWB is more than simply copying with global variation in the measures' total masses. Generalized barycenters tend to be more robust to local mass variations, which include outliers and missing parts \cite{Sejourne_Peyre_Vialard_2023}.

For the sake of a unified algorithmic proposal for both balanced and unbalanced WBs, in this work, we consider a different formulation for dealing with sets of measures with different total masses. 
\rev{Instead of relaxing both marginal constraints in each one of the $M$ transportation plans as done in \cite{Heinemann_Klatt_Munk_2022,Sejourne_Peyre_Vialard_2023} and references therein, our formulation generalizes the balanced WB by relaxing the constraint that the barycenter is a marginal measure of all underlying transportation plans. More specifically, by using the distance function to the subspace $\L$, that is
 $\mathtt{dist}_\L(\pi):=\min_{\theta \in \L}\, \norm{\theta - \pi}$, and a penalty parameter $\gamma>0$,
we propose the following nonlinear optimization problem yielding a UWB:
\begin{equation}\label{UWBgen}
\min_{\pi } \; \sum_{m=1}^M \inner{c^{(m)}}{\pi^{(m)}} + \gamma \mathtt{dist}_\L(\pi) \quad
         \mbox{s.t.}\quad  \pi^{(m)} \in \Pi^{(m)},\quad m=1,\ldots,M.
    %\left\{
    %\begin{array}{lllll}
    %    \displaystyle \min_{\pi }&  \sum_{m=1}^M \inner{c^{(m)}}{\pi^{(m)}}  + \gamma \mathtt{dist}_\L(\pi)\\
     %    \mbox{s.t.}& \pi^{(m)} \in \Pi^{(m)},\quad m=1,\ldots,M.            
    %\end{array}
    %\right.
\end{equation}
}
%When the measures have equal total masses, we highlight that this nonlinear formulation is a relaxation of the LP~\eqref{LPgen}. When the measures have different total masses, then~\eqref{LPgen} is infeasible while~\eqref{UWBgen} always has a solution.
While our approach can be seen as an abridged alternative to the thorough methodologies of \cite{Heinemann_Klatt_Munk_2022} and \cite{Sejourne_Peyre_Vialard_2023}, its favorable structure for efficient splitting techniques combined with the good quality of the issued UWBs confirms the formulation's practical interest.

Thanks to our unified analysis,  MAM can be applied to both balanced and unbalanced WB problems without any change: all that is needed is to set up \rev{the parameter $\gamma>0$ in~\eqref{UWBgen}}.
To the best of our knowledge, MAM is the first approach capable of handling balanced and unbalanced WB problems in a single algorithm, which can be further run in a deterministic or randomized fashion.
 In addition to its versatility, MAM copes with scalability issues arising from barycenter problems, is memory efficient, and has convergence guarantees. 
%Although MAM's convergence speed is not as exceptional as IBP's, it is observed in practice that after a few tens of iterations, the average of marginals computed by MAM is a better approximation of a WB than the solution provided by IBP, no matter how many iterations the latter performs\footnote{The reason is that IBP converges fast but to the solution of an approximate model, not to an exact WB.}.
As further contributions, we conduct experiments on several data sets from the literature to demonstrate the computational efficiency and accuracy of the new algorithm and make our Python codes publicly available at the link (\url{https://ifpen-gitlab.appcollaboratif.fr/detocs/mam_wb}).
 
The remainder of this work is organized as follows. \Cref{sec:background} introduces the notation
and recalls the formulation of balanced WB problems.
The proposed formulation for unbalanced WBs is presented in \Cref{sec:UWB}.
%In \cref{sec:DR} the WB problems are reformulated in a suitable way so that the Douglas-Rachford splitting (DR) method can be applied. The same section briefly recalls the DR algorithm and its convergence properties both in the deterministic and randomized settings. 
\Cref{sec:DR} briefly recalls the Douglas-Rachford splitting (DR) method and its convergence properties both in the deterministic and randomized settings.
The main contribution of this work, the Method of Averaged Marginals, is presented in \Cref{sec:MAM1}.
Convergence analysis is given in the same section by relying on the DR algorithm's properties.
\Cref{sec:num} illustrates the
numerical performance of the deterministic and randomized variants of MAM on several data sets from the literature. Numerical comparisons with \rev{the free-support method \cite{Altschuler_Boix-Adsera_2020} and fixed-support approaches in \cite{IBP} and \cite{J.Ye}} are presented for the balanced case. Then, some applications of the UWB are considered.

\section{Background on optimal transport and Wasserstein barycenter}\label{sec:background}
Throughout this work, for $\tau\geq 0$ a given scalar, the notation $\Delta_R(\tau)$ denotes the set of vectors in $\R^R_+$ adding up to $\tau$, that is,
\begin{equation}\label{eq:Delta}
\Delta_R(\tau):=\col{u \in\R^R_+:\; \sum_{i=1}^R u_i=\tau}.
\end{equation}
If $\tau=1$, then $\Delta_R(\tau)$, denoted simply by $\Delta_R$, is the $R+1$ simplex.
Let \rev{$\mathcal{P}(\Re^d)$} be the set of Borel probability measures on \rev{$\Re^d$}.
Furthermore, let $\xi$ and $\zeta$ be two random vectors having probability measures $\mu$ and $\nu$ in $\mathcal{P}(\Re^d)$, that is, $\xi\sim \mu$ and $\zeta\sim \nu$.
\rev{Their (quadratic) $2$-Wasserstein distance is given by:
\begin{equation}\label{WD}
\tag{WD}
W_2(\mu,\nu):=\left(\inf_{\pi \in U(\mu,\nu)} \int_{\R^d\times\R^d} \norm{\xi-\zeta}^2 d\pi(\xi,\zeta)\right)^{1/2},
\end{equation}
where $U(\mu,\nu)$ is the set of all probability measures on $\Re^d\times \Re^d$ having marginals $\mu$ and $\nu$. 
We denote by $W_2^2(\mu,\nu)$ the squared Wassserstein distance, i.e.,  $W_2^2(\mu,\nu) := (W_2(\mu,\nu))^2$.}

\begin{definition}[Wasserstein Barycenter - WB]
Given $M$ measures
$\{\nu^{(1)},\ldots,\nu^{(M)}\}$ in $\mathcal{P}(\Re^d)$ and $\alpha \in \Delta_M$, a \emph{Wasserstein barycenter} with weights $\alpha$ 
 is a solution to the following optimization problem
\begin{equation}\label{WB}
%\tag{WB}
\min_{\mu \in \mathcal{P}(\Re^d)}\; \sum_{m=1}^M \alpha_m W_2^2(\mu,\nu^{(m)})\,.
\end{equation}
\end{definition}
%\pma{Je pense qu'à ce niveau là, i.e. quand on minimise dans des espaces de probabilité, il est faux de dire que la fonction est non  convexe : Si je prends deux proba $\mu_1,\mu_2$ avec respectivement $\pi_1$ et $\pi_2$ les plans de transport de $\nu$ vers $\mu_1$ et $\mu_2$ respectivement. Alors pour tout $\lambda \in (0,1)$, le plan $\lambda \pi_1 + (1-\lambda)\pi_2\in U(\lambda \mu_1 + (1-\lambda)\mu_2,\nu)$ et on a
%\begin{align*}
%W_2^2(\lambda \mu_1 + (1-\lambda) \mu_2, \nu) &= \inf_{\pi \in U(\lambda \mu_1 + (1-\lambda)\mu_2,\nu)} \int \norm{x-y}_2^2 d\pi(x,y) \\
%&\leq\int \norm{x-y}_2^2  d\left[\lambda \pi_1(x,y) + (1-\lambda) \pi_2(x,y)\right] \\
%&=\lambda W_2^2(\mu_1,\nu) + (1-\lambda) W_2^2(\mu_2,\nu)
%\end{align*}
%La fonction me semble donc convexe comme combinaison convexe de fonctions convexes. En revanche, quand on découpe la mesure en 2 variables d'optimisation, le support et les valeurs, là le problème devient non-convexe, mais c'est la façon de découper la mesure en termes de support et de quantizers qui introduit une non-convexité.}\\

\rev{
%Since the minimization is performed joint on the support and mass, the above is a nonconvex optimization problem. 
Informally, a WB $ \mu$ is a measure such that the total cost for transporting from $ \mu$ to all $\nu^{(m)}$ is minimal concerning the quadratic Wasserstein distance.} A WB $\mu$ exists in generality and, if one of the $\nu^{(m)}$ vanishes on all Borel subsets of Hausdorff dimension $d-1$, then it is also unique \cite{Carlier}. 
\rev{
In this work, we are given $M$ empirical (discrete) measures $\nu^{(m)}$ having finite support sets:
\begin{equation}\label{eq:empirical}
{\tt supp}(\nu^{(m)}):=\col{\zeta^{(m)}_1,\ldots,\zeta^{(m)}_{S^{(m)}}}
\quad \mbox{and} \quad   \nu^{(m)}=\sum_{s=1}^{S^{(m)}} q^{(m)}_s \delta_{\zeta^{(m)}},%\quad m=1,\ldots,M,
\end{equation}
with $\delta_u$ the Dirac unit mass on $u \in \Re^d$ and $q^{(m)}\in \Delta_{S^{(m)}}$, $m=1,\ldots,M$.
In this case, the uniqueness of WB is no longer ensured in general but the following results hold \cite{Anderes_Borgwardt_Miller_2016}.
\begin{theorem}[{From \cite{Anderes_Borgwardt_Miller_2016}}] \label{Th_Borg}
    Consider $M$ empirical measures $\nu^{(m)}$ as in~\eqref{eq:empirical}.
    Then, problem~\eqref{WB} has at least one solution. 
    \begin{itemize}
        \item [a)]     Every solution $\mu$ satisfies
    \begin{equation}\label{eq:free-support}
        {\tt supp}(\mu)\subset \Xi:=\col{
        \sum_{m=1}^M \alpha_m \zeta^{(m)}_s:\; \zeta^{(m)}_s \in {\tt supp}(\nu^{(m)}),\; m=1,\ldots,M 
        }.
    \end{equation}
     \item [b)] There exists a sparse solution $\bar \mu$ such that
    \begin{equation}\label{eq:sparse}
        |{\tt supp}(\bar \mu)|\leq  T - M+1\textnormal{, where }T:=\sum_{m=1}^M S^{(m)}
    \end{equation}
    \item [c)] If $\nu^{(m)}$, $m=1,\ldots,M$, are supported on the same grid $K_1\times\cdots\times K_d$-grid %, uniform in all directions, 
    in $\Re^d$, and $\alpha_m=\frac{1}{M}$ for all $m$, then there exits a solution $\mu$ to~\eqref{WB} supported on  $(M(K_1-1)+1)\times\cdots\times (M(K_d-1)+1)$-grid, uniform in all directions.
    \end{itemize}
\end{theorem}
\begin{proof}
    Item a) is Proposition 1 (iii) in \cite{Anderes_Borgwardt_Miller_2016}, Items b) and c) are Theorem 2 and Corollary 1 in the same paper.
\end{proof}

Let $R:=|\Xi|$ be the number of points $\xi$ in the finite set $\Xi$. It follows from item a) that any solution $\mu$ to problem~\eqref{WB} defined with discrete measures has the form
\[
\mu= \sum_{r=1}^R  p_r \delta_{\xi_r}, \quad \mbox{with}\quad p \in \Delta_R.
\]
By letting $\mathcal{P}_\Xi(\Re^d):=\col{\mu \in \mathcal{P}(\Re^d):\, {\tt supp}(\mu) \subset \Xi}$, problem~\eqref{WB} can be reformulated as a finite-dimensional LP by replacing the constraint $\mu \in \mathcal{P}(\Re^d)$ with $\mu \in \mathcal{P}_\Xi(\Re^d)$. Indeed, by considering all the $R$ points in $\Xi$, problem~\eqref{WB} boils down to 
\begin{equation}  \label{HugeLP}
\left\{
\begin{array}{llllllllll}
\displaystyle \min_{p \in \Delta_R,\;\pi\geq 0} & \displaystyle \sum_{m=1}^M\alpha_m\sum_{r=1}^R \sum_{s=1}^{S^{(m)}} \norm{\xi_r - \zeta_s^{(m)}}^2\pi^{(m)}_{rs}\\[1.5em]
%\mbox{}\\
 \mbox{s.t.} & \sum_{r=1}^R \pi^{(m)}_{rs}= q^{(m)}_s,\quad \;s=1,\ldots,S^{(m)},\; m=1,\ldots,M \\[.5em]
 &\sum_{s=1}^{S^{(m)}} \pi^{(m)}_{rs} = p_r,\quad \;r=1,\ldots,R,\; m=1,\ldots,M .
\end{array}
\right.
\end{equation}
Such LP scales exponentially in the number of measures. To see that, assume that all measures have support of same cardinality $S$, i.e., $S^{(m)}=S$ for all $m=1,\ldots,M$: then $R=S^M$ and the LP has $MRS+R=M(S)^{M+1}+(S)^{M}$ variables and  $M(R+S)=M(S)^{M}+MS$ equality constraints.
When the measures are supported on the same discrete grid in $\Re^d$ and $\alpha_m=\frac{1}{M}$ for all $m$, the number of different points in $\Xi$ reduces drastically: in this case, $S=K^d$ and  $R= ((K-1)M+1)^d$ from item c) above, which is significantly smaller than $S^M = K^{dM}$ in the previous general setting (however still colossal in real-life applications) \cite{Borgwardt_Patterson_2020,Borgwardt_Patterson_2021}. These observations shed light on how the number of measures, the sizes of their support sets, and dimension $d$ impact the size of problem~\eqref{HugeLP}. The paper \cite{Borgwardt_Patterson_2021} investigates the complexity of computing a sparse Wasserstein barycenter, and \cite{Altschuler_Boix-Adsera_2022} shows that a WB cannot be computed in time polynomial in the number
of measures, (maximum) support size, and dimension $d$. 
%it is NP-hard.

%On the other hand, 
Item b) ensures that a sparse solution exists with a support size of at most $M(S-1)+1$,
motivating thus the so-called \emph{fixed-support} approaches that generally  employ a block-coordinate optimization heuristic: 
 at iteration $k$, a support $\Xi^k$ of size $R$ (say $R\leq M(S-1)+1$) is fixed and the LP~\eqref{HugeLP} (with $\xi_r \in \Xi^k$) is solved to get an optimal plan $\pi^k$, which is in turn fixed in the optimization problem $\min_{\xi}\; \sum_{m=1}^M\alpha_m\sum_{r=1}^R \sum_{s=1}^{S^{(m)}} \norm{\xi_r - \zeta_s^{(m)}}^2\pi^{k,(m)}_{rs}$ yielding a new fixed support $\Xi^{k+1}$. Observe that this last problem has a straightforward solution (see for instance \cite[Alg. 2]{Cuturi_Doucet_14} and \cite[\S\, II]{J.Ye}).
 %and thus the heuristic bottleneck is the solution of~\eqref{HugeLP}.
 Otherwise, when the \emph{free-support} approach is taken, computing a WB amounts to solve~\eqref{HugeLP} by considering all points $\xi \in \Xi$, thus yielding an LP of astronomical size. 
Hence, whether an exact (free-support) or inexact (fixed-support) approach is employed to compute (approximate) a WB, one invariably has to face a large-scale LP of the form~\eqref{HugeLP}, which fits into the structure of~\eqref{LPgen} by dropping the decision variable\footnote{Although variable $p$ is the one of interest, 
it can be removed from \eqref{HugeLP} and easily recovered thanks to the balanced subspace \eqref{eq:L}.} $p$, setting
\begin{equation}\label{eq:Pi}
\Pi^{(m)}:=\left\{\pi^{(m)}\geq 0:\,\sum_{r=1}^R \pi^{(m)}_{rs}=q^{(m)}_s,\; s=1,\ldots,S^{(m)}\right\},\; m=1,\ldots,M,
\end{equation}
and the linear subspace %(of balanced transportation plans)
\begin{equation}\label{eq:L}
\L:=\left\{\pi=(\pi^{(1)},\ldots,\pi^{(M)})\left\vert \begin{array}{lclllllll}
\sum_{s=1}^{S^{(1)}} \pi^{(1)}_{rs} &= &\sum_{s=1}^{S^{(2)}} \pi^{(2)}_{rs},& r=1,\ldots,R\\
\sum_{s=1}^{S^{(2)}} \pi^{(2)}_{rs} &= & \sum_{s=1}^{S^{(3)}} \pi^{(3)}_{rs},&r=1,\ldots,R\\
&\vdots\\
\sum_{s=1}^{S^{(M-1)}} \pi^{(M-1)}_{rs} &=&\sum_{s=1}^{S^{(M)}} \pi^{(M)}_{rs},& r=1,\ldots,R
\end{array}
\right.
\right\}.
\end{equation}
The polytope $\Pi^{(m)}$ is composed of transportation plans $\pi^{(m)}$  with right marginals $q^{(m)}$. The set with all left marginals is characterized by the linear subspace $\L$ of ``balanced plans".

In light of the above observations, we focus on a decomposition technique for solving LPs of the form~\eqref{LPgen} to render computing a (free or fixed support) WB possible beyond moderate-scale data inputs.
We mention in passing that no assumption on the costs of \eqref{LPgen} is required. This fact opens the way to consider, for instance, $W_\iota^\iota$ Wasserstein distances with $\iota \in [1,\infty)$. 

}

\section{Discrete unbalanced Wasserstein Barycenter}\label{sec:UWB} 
A well-known drawback of formulation~\eqref{WB} is its limitation to measures with equal total masses, so the feasible set defining the Wasserstein distance~\eqref{WD} is nonempty. To overcome this limitation, an idea is to relax the marginal constraints in~\eqref{WD} to cope with “unbalanced” measures, i.e., with different masses \cite{Sejourne_Peyre_Vialard_2023}. Different manners to relax these marginal constraints yield different generalizations of the concept of Wasserstein barycenter, known in the literature by the name of  \emph{unbalanced Wasserstein barycenters} (UWBs). 
%\rev{These generalizations} deal with the fact that in some applications the non-negative vectors $q^{(m)}$, $m=1,\ldots,M$, are not necessarily probability related: they do not live in any simplex and $\sum_{j=1}^{S(m)}q_j^{(m)}\neq \sum_{j=1}^{S(m')}q_j^{(m')}$ for at least a pair $(m,m')$ such that  $m\neq m'$.
%In this case, the WB problem~\cref{HugeLP} is infeasible and a (balanced) WB is not defined. 
%As an example, think of $p$ as the quantity of goods to be produced, and  $q^{(m)}$ as an event of the random demand for these goods.  Since the demand events are different, we can not decide on  a production $p$ that satisfies all the $M$ future demands exactly: the  production of some goods might be overestimated, while for others, underestimated. Hence, it makes sense to produce $p$ that minimizes not only transportation costs but also  the (expectation w.r.t. $\alpha$ of the) mismatches between production $p$ and demand $q^{(m)}$.  
%Such an intention can be mathematically formulated in several ways. In this work, we propose a simple one by using  a metric
In this work, we propose a new formulation that uses a metric to measure the distance of a multi-plan $\pi=(\pi^{(1)},\ldots,\pi^{(M)})$ to the balanced subspace $\L$ defined in~\cref{eq:L}. We take such a metric as being the
Euclidean distance $\mathtt{dist}_\L(\pi)=\min_{\theta \in \L}\, \norm{\theta - \pi}$
and define the following nonlinear optimization problem, with $\gamma>0$ a penalty parameter, \rev{$\Pi^{(m)}$ given in~\eqref{eq:Pi}, and $\L$ in~\eqref{eq:L}:
\begin{equation}  \label{UWB}
\left\{
\begin{array}{llllllllll}
\displaystyle \min_{\pi} & \displaystyle \sum_{m=1}^M\alpha_m\sum_{r=1}^R \sum_{s=1}^{S^{(m)}} \norm{\xi_r - \zeta_s^{(m)}}^2\pi^{(m)}_{rs}+\gamma \mathtt{dist}_\L(\pi)\\[1.5em]
%\mbox{}\\
 \mbox{s.t.} & \pi^{(m)} \in \Pi^{(m)},\;  m=1,\ldots,M .
\end{array}
\right.
\end{equation}
}
 This problem has always a solution because the objective function is continuous and the non-empty feasible set is compact.
Note that in the balanced case, problem~\cref{UWB} is a relaxation of~\cref{LPgen}. In the unbalanced setting,
any feasible point to~\cref{UWB} yields $\mathtt{dist}_\L(\pi)>0$. As this distance function is strictly convex outside $\L$, the above problem has a unique solution.

\begin{definition}[Discrete Unbalanced Wassertein Barycenter - UWB]\label{def:UWB}

Given a set $\{\nu^{(1)},\ldots,\nu^{(M)}\}$ of unbalanced non-negative vectors, let $\bar \pi\geq0$ be the unique solution to problem~\cref{UWB}, and $\tilde \pi$ the projection of $\bar \pi$ onto the balanced subspace $\L$, that is, $\tilde \pi:={\tt Proj}_\L(\bar \pi)$. 
The \rev{measure $\mu=\sum_{r=1}^R p_r\delta_{\xi_r}$} with $ p_r := \sum_{s=1}^{S(m)} \tilde \pi^{(m)}_{rs}$,$ r=1,\ldots,R$ (no matter $m\in\col{1,\ldots,M}$) is defined as the $\gamma$-unbalanced Wasserstein barycenter of $\{\nu^{(1)},\ldots,\nu^{(M)}\}$.
\end{definition}

The above definition differs from the ones found in the literature, which relaxes the constraints $\sum_{r=1}^R \pi_{rs}^{(m)}=q^{(m)}_s$, see for instance \cite{Sejourne_Peyre_Vialard_2023,Heinemann_Klatt_Munk_2022}. Although the above definition is not as general as the ones
of the latter references,
it %is important to highlight that our UBW definition 
provides meaningful results (see \Cref{sec:resul-UWB} below), uniqueness of the barycenter (if unbalanced), and is indeed  an extension of (balanced) WB \rev{as the LP \eqref{HugeLP} is for~\eqref{LPgen} what the nonlinear problem~\eqref{UWB} is for~\eqref{UWBgen}.}
\begin{proposition} \label{prop_gamma}
Suppose that
$\{\nu^{(1)},\ldots,\nu^{(M)}\}$ are probability measures and let 
$\gamma>\norm{c}$  in problem~\cref{UWBgen}.
\rev{Then $\bar \pi$ solves~\eqref{UWBgen}  if and only if $\bar \pi$ solves~\eqref{LPgen}. In particular, any UWB according to \cref{def:UWB} is also a (balanced) WB.}
\end{proposition}
\begin{proof}
Being a linear function, the objective of~\eqref{LPgen} is  Lipschitz continuous with constant $\norm{c}$. Thus, the standard theory of exact penalty methods in optimization (see for instance \cite[Prop. 1.5.2]{Bertsekas_2015}) ensures that, when
$\gamma>\norm{c}$, 
\rev{$\bar \pi$ solves problem~\cref{UWBgen} if and only if  $\bar \pi$ solves~\cref{LPgen}. In particular, for $\Pi^{(m)}$ and $\L$ given in~\eqref{eq:Pi} and \eqref{eq:L}, respectively,
 $\bar \pi={\tt Proj}_\L(\bar \pi)$ and
  the measure $\mu=\sum_{r=1}^R p_r\delta_{\xi_r}$  with  $ p_r = \sum_{s=1}^{S(m)} \bar \pi^{(m)}_{rs}$ (as in~\cref{def:UWB}) solves~\eqref{HugeLP}.}
\end{proof}

\noindent Another advantage of \cref{def:UWB} is that the problem yielding the proposed UBW enjoys a favorable structure that can be efficiently exploited by splitting methods.  \rev{Indeed, it turns out that computing a balanced or unbalanced WB can be done by the algorithm presented in \Cref{sec:MAM}. In the next section, we show that the computational burden to solve either the LP \cref{LPgen} or the nonlinear problem~\cref{UWBgen} by the Douglas-Rachford splitting method is the same.}

%At the first glance, computing a UWB seems much more challenging than computing a WB: the former is obtained by solving a nonlinear optimization problem followed by the projection onto the balanced subspace, while the latter is a solution of an LP. However, in practice, the LP problem~\cref{HugeLP} is already too large to be solved directly by off-the-shelf solvers and thus decomposition techniques need to come into play. In the next section we show that the computational burden to solve either the LP \cref{HugeLP} or the nonlinear problem~\cref{UWB} by the Douglas-Rachford splitting method is the same. Indeed, it turns out that both problems can be efficiently solved by the algorithm presented in \cref{sec:MAM}. 

\section{Problem reformulation and the DR algorithm}
\label{sec:DR}

\rev{We have recalled that computing a free or fixed-support (balanced) WB requires solving one or more LPs of the form~\eqref{LPgen}, with $\Pi^{(m)}$ and $\L$ given in \eqref{eq:Pi} and~\eqref{eq:L}, respectively. Furthermore, according to our new Definition~\ref{def:UWB}, computing a UWB requires solving one (or more) nonlinear problems of the form~\eqref{UWBgen}.
In this section, we focus on problems~\cref{LPgen}
and \cref{UWBgen} and reformulate them} in a suitable way so that the Douglas-Rachford splitting operator method can be easily deployed to compute a \rev{discrete} barycenter in the balanced and unbalanced settings.
\rev{To this end, let us consider} the indicator function $\ind_C$ of a convex set $C$ (that is $\ind_C(x)=0$ if $x\in C$ and $\ind_C(x)=\infty$ otherwise) to define the convex functions
\begin{equation}  \label{eq:fm} 
    f^{(m)}(\pi^{(m)}):= 
\displaystyle \sum_{r=1}^R \sum_{s=1}^{S^{(m)}} c^{(m)}_{rs}\pi^{(m)}_{rs} +\ind_{\Pi^{(m)}}(\pi^{(m)})
,\quad m=1,\ldots,M,
\end{equation}
and recast problems~\cref{LPgen} and \cref{UWBgen}
in the following more general setting
\begin{subequations}\label{pbm}
\begin{align}
    &\displaystyle \min_{\pi }\; \displaystyle f(\pi)+g(\pi)\,, \text{   with}:\label{pbm-a}\\
    &f(\pi):=\sum_{m=1}^M f^{(m)}(\pi^{(m)})\quad \mbox{and}\quad
g(x):=\left\{
\begin{array}{ll}
     \ind_\L(\pi)  & \mbox{ if balanced} \\
    \gamma\,{\tt dist}_\L(\pi) & \mbox{ if unbalanced.}
\end{array}
\right.\label{pbm-b}
\end{align}
\end{subequations}
Since $f$ is polyhedral, \rev{$\dom{f}\cap {\tt ri}(\dom{g})\neq \emptyset$\footnote{${\tt ri}$ denotes the relative interior of a set.}}, and \cref{pbm} is solvable, \rev{it follows from \cite[Thm 27.2]{Bauschke_Combettes_2017} that}
computing one of its solutions is equivalent to 
\begin{equation}\label{GE}
\mbox{find \; $\pi $ \; such that \;  $0 \in \partial f(\pi)   + \partial g(\pi)$.}
\end{equation}
Recall that the subdifferential of a \rev{proper} convex lower semicontinuous functions is a maximal monotone operator \rev{\cite[Thm 20.40]{Bauschke_Combettes_2017}}. Thus, the above generalized equation is nothing but the problem of finding a zero of the sum of two maximal monotone operators, a well-understood problem
for which several methods exist (see, for instance, Chapters 25 and 27 of the textbook \cite{Bauschke_Combettes_2017}). Among the existing algorithms, %for solving~\cref{GE}, 
the Douglas-Rachford operator
splitting method \cite{Douglas_Rachford_1956} (see also \cite[\S\, 25.2 and \S\, 27.2 ]{Bauschke_Combettes_2017}) is the most popular one.
When applied to problem~\cref{GE}, the DR algorithm asymptotically computes a solution by repeating the following steps, with $k=0,1,\ldots$, given initial point
$\theta^0 =(\theta^{(1),0},\ldots,\theta^{(M),0})$ and prox-parameter $\rho>0$:
        \begin{equation}\label{DR}
        \left\{
        \begin{array}{lll}
        \pi^{k+1} &=&\displaystyle \arg\min_{\pi }\; g(\pi)+\frac{\rho}{2} \norm{\pi -\theta^k}^2\\[1em] 
        \hat \pi^{k+1}&=& 
        \displaystyle \arg\min_{\pi }\; f(\pi)  + \frac{\rho}{2} \norm{\pi - (2\pi^{k+1}-\theta^k)}^2\\[1em]
        %0 \in \partial f(\hat \pi^{k+1}) +\rho[\hat \pi^{k+1}-(2\pi^{k+1}-\theta^k)]\\
        \theta^{k+1}&=&\theta^k + \hat \pi^{k+1}-  \pi^{k+1}.
        \end{array}
        \right.
        \end{equation}
By noting that $f$ and $g$ in~\cref{pbm-b} are \rev{proper} convex lower semicontinuous functions and problem~\cref{pbm} is solvable \rev{(so is~\eqref{GE} \cite[Thm 27.2(ii)]{Bauschke_Combettes_2017})}, the following is a direct consequence of Theorem 25.6 and Corollary 27.4 of \cite{Bauschke_Combettes_2017}.
\begin{theorem}\label{theo:DR}
    The sequence $\{\theta^k\}$ produced by the DR algorithm~\cref{DR} converges to a point $\bar \theta$, and the following holds:
    $\bar \pi:= \arg\min_{\pi }\; g(\pi)+\frac{\rho}{2} \norm{\pi -\bar \theta}^2$
        solves~\cref{pbm}, and $\{\pi^k\}$ and $\{\hat \pi^k\}$ converge to $\bar \pi$. 
%    \begin{itemize}
%        \item $\bar \pi:= \arg\min_{\pi }\; g(\pi)+\frac{\rho}{2} \norm{\pi -\bar \theta}^2$
%        solves~\cref{pbm};
%        \item $\{\pi^k\}$ and $\{\hat \pi^k\}$ converges to $\bar \pi$. 
%    \end{itemize}
\end{theorem}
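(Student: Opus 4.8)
The plan is to verify that the two hypotheses of the standard Douglas–Rachford convergence theorem (Theorem 25.6 together with Corollary 27.4 of \cite{Bauschke_Combettes_2017}) are met in both the balanced and unbalanced settings, and then simply invoke that theorem. So the first step is to check that $f$ and $g$ in~\cref{pbm-b} are proper, lower semicontinuous, convex functions on the (finite-dimensional) Hilbert space in which the multi-plans $\pi$ live. For $f$ this follows because each $f^{(m)}$ is the sum of a linear function and the indicator $\ind_{\Pi^{(m)}}$ of a nonempty compact convex polytope; properness uses that each $\Pi^{(m)}\neq\emptyset$ (since $q^{(m)}\ge 0$ admits, e.g., the product plan). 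For $g$: in the balanced case $g=\ind_\L$ with $\L$ a nonempty linear subspace, hence proper, closed and convex; in the unbalanced case $g=\gamma\,{\tt dist}_\L$ with $\gamma>0$, which is a nonnegative, finite-valued, continuous (indeed convex, being a composition of the norm with the affine-like distance-to-subspace map) function, hence trivially proper and lsc.

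Next I would record that problem~\cref{pbm} is solvable: this is already established in the text — in the balanced case via the assumed feasibility/solvability of the LP~\cref{HugeLP-bis}, and in the unbalanced case because the objective is continuous and the feasible set $\bigcap_m \Pi^{(m)}$ is nonempty and compact. Solvability guarantees that $\operatorname{zer}(\partial f+\partial g)\neq\emptyset$; here one should note the qualification condition ensuring $\partial(f+g)=\partial f+\partial g$ holds — e.g. because $g$ is finite and continuous on all of the space in the unbalanced case, and because $\operatorname{dom} f\cap\operatorname{int}(\operatorname{dom} g)$ or a suitable relative-interior/polyhedral condition holds in the balanced case ($f$ polyhedral, $\L$ a subspace). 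This is the reformulation~\cref{GE} already argued in the excerpt.

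With these facts in hand, the conclusion is immediate: Theorem 25.6 of \cite{Bauschke_Combettes_2017} gives that the governing (shadow) sequence $\{\theta^k\}$ generated by the iteration~\cref{DR} converges to some point $\bar\theta$ such that $J_{(1/\rho)\partial g}(\bar\theta)$ — which is exactly $\bar\pi=\arg\min_\pi g(\pi)+\frac\rho2\|\pi-\bar\theta\|^2$ — is a zero of $\partial f+\partial g$, hence solves~\cref{pbm}. Corollary 27.4 then upgrades this to weak (here, since we are finite-dimensional, plain) convergence of both $\{\pi^k\}$ and $\{\hat\pi^{k+1}\}$ to this same $\bar\pi$. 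One small bookkeeping point is to match the notation: the proximal steps in~\cref{DR} are written with prox-parameter $\rho$ multiplying the quadratic, so $\pi^{k+1}=\operatorname{prox}_{g/\rho}(\theta^k)=J_{(1/\rho)\partial g}(\theta^k)$ and similarly for $\hat\pi^{k+1}$, which is the exact form assumed in the cited results.

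I do not anticipate a genuine obstacle — the statement is explicitly flagged in the excerpt as "a direct consequence" of the two cited results. The only thing requiring a moment's care is confirming lower semicontinuity/properness of $g$ uniformly across the two cases (trivial) and, slightly more delicately, the constraint-qualification step needed to pass between~\cref{pbm} and~\cref{GE} so that the DR fixed points really do correspond to minimizers of $f+g$ rather than to zeros of some enlargement; the polyhedrality of $f$ and the subspace/continuity structure of $g$ dispatch this. Everything else is a direct citation.
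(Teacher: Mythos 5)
Your proposal is correct and follows essentially the same route as the paper, which likewise verifies that $f$ and $g$ in~\cref{pbm-b} are lower semicontinuous convex functions, notes solvability of~\cref{pbm}, and then cites Theorem 25.6 and Corollary 27.4 of \cite{Bauschke_Combettes_2017} directly. Your extra care about properness, the qualification condition behind~\cref{GE} (handled by polyhedrality of $f$ and the subspace/continuity structure of $g$), and the identification $\pi^{k+1}=J_{(1/\rho)\partial g}(\theta^k)$ only makes explicit what the paper leaves implicit.
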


The DR algorithm is attractive when the two first steps in~\cref{DR} are convenient to execute, which is the case in our settings. As we will shortly see, the iterate $\pi^{k+1}$ above has an explicit formula in both balanced and unbalanced cases, and computing $\hat \pi^{k+1}$ amounts to executing a series of independent projections onto the simplex. This task can be accomplished exactly and efficiently by specialized algorithms.

Since $f$ in~\cref{pbm-b} has a \rev{separable} structure, the computation of $\hat \pi^{k+1}$ in~\cref{DR} breaks down to a series of smaller and simpler subproblems as just mentioned. Hence, we may exploit such a structure by combining recent developments in DR's literature to produce 
the following randomized version of the DR algorithm~\cref{DR}, with $\alpha$ the vector of weights in~\cref{WB}:
\begin{equation}\label{DR-random}
        \left\{
        \begin{array}{lll}
        \pi^{k+1} &=&\displaystyle \arg\min_{\pi }\; g(\pi)+\frac{\rho}{2} \norm{\pi -\theta^k}^2\\[1em]
        &&\mbox{Draw randomly $m\in\{1,2,\ldots,M\}$ with probability $\alpha_m>0$}\\[1em]
        \hat \pi^{(m),k+1}&=& 
        \displaystyle \arg\min_{\pi^{(m)} }\; f^{(m)}(\pi^{(m)})  + \frac{\rho}{2} \norm{\pi^{(m)} - (2\pi^{(m),k+1}-\theta^{(m),k})}^2\\[1em]
        %0 \in \partial f(\hat \pi^{k+1}) +\rho[\hat \pi^{k+1}-(2\pi^{k+1}-\theta^k)]\\
        \theta^{(m'),k+1}&=&\left\{\begin{array}{ll}
             \theta^{(m),k} + \hat \pi^{(m),k+1}-  \pi^{(m),k+1} & \mbox{if $m'= m$} \\
            \theta^{(m'),k}  & \mbox{if $m'\neq m$.}
        \end{array}\right.
        \end{array}
        \right.
        \end{equation}
The randomized DR algorithm~\cref{DR-random} aims at reducing the computational burden and accelerating the optimization process. Such goals can be attained in some situations, depending on the underlying problem and available computational resources.
%where the first step in~\cref{DR-random} (computation of $\pi^{k+1}$) is simple. As we show next, this is the case for $g$ given in~\cref{pbm-b}. 
        The particular choice of $\alpha_m>0$ as the probability of picking up the $m^{th}$ subproblem is not necessary for convergence: the only requirement is that every subproblem is picked up with a fixed and positive probability.
The intuition behind our choice is that measures that play a more significant role in~\cref{WB} (i.e., higher $\alpha_m$) should have more chance to be picked by the randomized DR algorithm.
Furthermore, the presentation above where only one measure (subproblem) in~\cref{DR-random} is drawn is made for the sake of simplicity. One can perfectly split the set of measures into $nb < M$ bundles, each containing a subset of measures, and select randomly bundles instead of individual measures. Such an approach proves advantageous in a parallel computing environment with $nb$ available machines/processors (see \cref{IBP_vs_MAM} in the numerical section).
The almost surely (i.e., with probability one) convergence of the randomized DR algorithm depicted in~\cref{DR-random} can be summarized as follows \cite[Thm 2]{Iutzeler_2013}.
%We refer the interest reader to Theorem 2 in \cite{Iutzeler_2013} for the proof (see also additional comments in the Appendix of \cite{Randomized_PHA_2020}).
%We refer the interest reader to \cite{Combettes_Pesquet_2015} for a detailed analysis of the topic (see in, particular,  Remark 3.5 and Section 5 in that paper.)
\begin{theorem}\label{theo:DR-random}
    The sequence $\{\pi^k\}$ produced by the randomized DR algorithm~\cref{DR-random} %converges almost surely to a solution $\bar \pi$ of problem~\cref{pbm}.
    \rev{converges almost surely to
a random variable  $\bar \pi$ taking values in the solution set of problem~\cref{pbm}.}
\end{theorem}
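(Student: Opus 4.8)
The plan is to recognize~\cref{DR-random} as a randomized single-block-coordinate fixed-point iteration of an averaged operator and to invoke the known almost-sure convergence of such schemes. First I would rewrite the deterministic recursion~\cref{DR} as a single fixed-point iteration $\theta^{k+1}=T(\theta^k)$ on the product space $\prod_{m=1}^M\R^{R\times S^{(m)}}$, where $T$ is the Douglas--Rachford operator associated with $(f,g)$, namely $T(\theta)=\theta+\mathrm{prox}_{f/\rho}\big(2\,\mathrm{prox}_{g/\rho}(\theta)-\theta\big)-\mathrm{prox}_{g/\rho}(\theta)$. Since $f$ and $g$ in~\cref{pbm-b} are proper, lower semicontinuous and convex, both proximal maps are single-valued and firmly nonexpansive, so $T$ is firmly nonexpansive (equivalently $\tfrac{1}{2}$-averaged); and because problem~\cref{pbm} is solvable, the correspondence between its solutions and $\mathrm{Fix}(T)$ already used in \cref{theo:DR} (a consequence of \cite[Thm.~25.6, Cor.~27.4]{Bauschke_Combettes_2017}, together with the polyhedrality of $f$ invoked in passing from~\cref{pbm} to~\cref{GE}) guarantees $\mathrm{Fix}(T)\neq\emptyset$.

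The key structural observation is that $f=\sum_{m=1}^M f^{(m)}(\pi^{(m)})$ is block separable, so $\mathrm{prox}_{f/\rho}$ acts blockwise and hence, writing $p:=\mathrm{prox}_{g/\rho}(\theta)$, one has $[T(\theta)]^{(m)}=\theta^{(m)}+\mathrm{prox}_{f^{(m)}/\rho}\big(2p^{(m)}-\theta^{(m)}\big)-p^{(m)}$. Comparing with~\cref{DR-random}, a single iteration of the randomized algorithm computes $p=\pi^{k+1}=\mathrm{prox}_{g/\rho}(\theta^k)$, draws an index $m$ with probability $\alpha_m>0$, and sets $\theta^{(m),k+1}=[T(\theta^k)]^{(m)}$ while leaving the other blocks unchanged. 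Thus~\cref{DR-random} is exactly the randomized single-block-coordinate fixed-point iteration of the averaged operator $T$, with i.i.d. block selection and fixed positive coordinate probabilities $\alpha_m$ --- precisely the setting of \cite[Thm.~2]{Iutzeler_2013} (see also the comments in \cite{Randomized_PHA_2020}). That result then yields $\theta^k\to\bar\theta$ almost surely for some $\bar\theta\in\mathrm{Fix}(T)$.

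It remains to transfer the conclusion to the $\pi$-iterates. As $\mathrm{prox}_{g/\rho}$ is nonexpansive, hence continuous, $\pi^{k+1}=\mathrm{prox}_{g/\rho}(\theta^k)\to\mathrm{prox}_{g/\rho}(\bar\theta)=:\bar\pi$ almost surely, and by the same fixed-point/solution correspondence used in \cref{theo:DR} the limit $\bar\pi=\arg\min_{\pi} g(\pi)+\tfrac{\rho}{2}\norm{\pi-\bar\theta}^2$ solves~\cref{pbm}. This establishes the claim; the extension where one draws a bundle of $nb<M$ measures rather than a single one is immediate, since grouping blocks merely changes the partition underlying the coordinate iteration.

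\textbf{Main obstacle.} I expect the only genuinely delicate point to be in the second paragraph: one must verify carefully that, although $\mathrm{prox}_{g/\rho}$ is evaluated on the \emph{entire} vector $\theta^k$ at each iteration, the actual \emph{update} carried out by~\cref{DR-random} coincides with one coordinate block of $T(\theta^k)$ --- so that no Gauss--Seidel-type coupling or stale-information phenomenon occurs --- and that i.i.d. sampling with fixed probabilities $\alpha_m>0$, together with firm nonexpansiveness of $T$ and $\mathrm{Fix}(T)\neq\emptyset$, matches verbatim the hypotheses of the cited convergence theorem; everything else is bookkeeping.
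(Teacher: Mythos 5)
Your proposal is correct and follows essentially the same route as the paper, which justifies \cref{theo:DR-random} precisely by viewing \cref{DR-random} as a randomized block-coordinate update of the firmly nonexpansive DR operator and deferring to \cite[Thm.~2]{Iutzeler_2013} (with the comments in \cite{Randomized_PHA_2020}); your added verification that the full $\mathrm{prox}_{g/\rho}$ is evaluated at $\theta^k$ while only the drawn block of $T(\theta^k)$ is written back, and the transfer to $\{\pi^k\}$ by continuity of the proximal map, is exactly the bookkeeping the paper leaves implicit.
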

 \rev{
 This result is a special case of a thorough analysis given in \cite{Combettes_Pesquet_2015} (see, in particular,  Remark 3.5 and Section 5 in that paper.) We note that the practical performance of the randomized scheme \eqref{DR-random} depends on computational resources and is thus not always effective (see Figure~\ref{IBP_vs_MAM} below). The deterministic and asynchronous decomposition methods in \cite{Combettes_Ecstein_2018} provide significantly more flexibility in selecting the measure $\nu^{(m)}$  (or even part of it) activated at every iteration and thus may perform better than the randomized scheme above.
 As these methods do not follow the general lines of the DR algorithm, we leave the specialization of such approaches to the WB problem for future research.
 }

In the next section, we further exploit the structure of functions $f$ and $g$ in~\cref{pbm} and rearrange terms in the schemes~\cref{DR} and~\cref{DR-random} to provide an easy-to-implement and memory-efficient algorithm for computing balanced and unbalanced WBs.

\section{The Method of Averaged Marginals}~\label{sec:MAM1}
Both deterministic and randomized DR algorithms above require evaluating the proximal mapping of the function $g$ given in~\cref{pbm-b}. 
% i.e.,
% \[
% \pi^{k+1}=\displaystyle \arg\min_{\pi }\; g(\pi)+\frac{\rho}{2} \norm{\pi -\theta^k}^2.
% \]
In the balanced WB setting, $g$ is the indicator function of $\L$ given in \cref{eq:L}, and thus $\pi^{k+1}$ in~\eqref{DR} is the projection of $\theta^k$ onto $\L$: $\pi^{k+1}= {\tt Proj}_\L(\theta^k)$. On the other hand, in the unbalanced WB case, $g(\cdot)$ is the penalized distance function $\gamma\, {\tt dist}_\L(\cdot)$. Computing $\pi^{k+1}$ then amounts to evaluating the proximal mapping of the distance function: $ \min_{\pi }\;{\tt dist}_\L(\pi)+\frac{\rho}{2\gamma} \norm{\pi -\theta^k}^2$. The unique solution to this problem is given by \rev{\cite[Example 24.28]{Bauschke_Combettes_2017}}
\begin{equation}
\label{eq:Unbalanced-prox}
\pi^{k+1} = 
%\theta^k +\min\left\{1,\,\frac{\gamma}{\rho\,\mathtt{dist}_\L(\theta^k)}\right\}(\mathtt{Proj}_{\L}(\theta^k)-\theta^k).
\left\{\begin{array}{llll}
   \mathtt{Proj}_{\L}(\theta^k)  & \mbox{ if }\; \rho\,\mathtt{dist}_\L(\theta^k) \leq \gamma \\ 
  \theta^k +\frac{\gamma}{\rho\,\mathtt{dist}_\L(\theta^k)}(\mathtt{Proj}_{\L}(\theta^k)-\theta^k)   & \mbox{ otherwise.} 
\end{array}
\right.
\end{equation}
%(Here we adopt the convection that $\gamma/0:=\infty$ for all $\gamma\geq 0$.)
Hence, computing $\pi^{k+1}$ in both balanced and unbalanced settings boils down to projecting onto the balanced subspace. This fact allows us to provide a unified algorithm for WB and UWB problems.

\subsection{Projecting onto the subspace of balanced plans}\label{sec:proj}
In what follows we exploit the particular geometry of $\L$ to provide an explicit formula for projecting onto this linear subspace.
% The projection operator onto a closed convex set is computationally expensive in general, as it typically requires solving a nonlinear optimization problem. However, the particular geometry of the linear subspace $\L$ yields an explicit formula for projection onto this set.

\begin{proposition} \label{prop_2} 
With the notation of \Cref{sec:background}, let $\theta \in \Re^{R\times T}$,
\begin{subequations}
\begin{equation}\label{pr}
a_m:=\frac{\frac{1}{S^{(m)}}}{\sum_{j=1}^M\frac{1}{S^{(j)}}},\quad 
p^{(m)}:=\left(\sum_{s=1}^{S^{(m)}}\theta_{rs}^{(m)}\right)_{1\leq r\leq R},
\quad \mbox{and}\quad p:=\sum_{m=1}^M a_mp^{(m)}.
\end{equation}
The  projection $\pi={\tt Proj}_{\L}(\theta)$ has the explicit form:
\begin{equation}\label{row_sol}
\pi^{(m)}_{rs} := \theta^{(m)}_{rs} +\frac{(p_r-p^{(m)}_r)}{S^{(m)}},\quad s=1,\ldots,S^{(m)},\;r=1,\ldots,R,\; m=1,\ldots,M.
\end{equation}
\end{subequations}
\end{proposition}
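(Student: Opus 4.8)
The plan is to verify directly that the candidate point $\pi$ defined by~\eqref{row_sol} satisfies the two characterizing properties of the Euclidean projection onto the closed linear subspace $\L$: namely (a) $\pi \in \L$, and (b) the residual $\pi - \theta$ is orthogonal to $\L$ (equivalently, belongs to $\L^\perp$). Since $\L$ is a linear subspace, these two conditions together are necessary and sufficient for $\pi = {\tt Proj}_\L(\theta)$, so no optimality-system manipulation is needed beyond this.

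For step (a), I would compute the left marginal of $\pi^{(m)}$, i.e. $\sum_{s=1}^{S^{(m)}} \pi^{(m)}_{rs}$. Using~\eqref{row_sol} and the definition $p^{(m)}_r = \sum_{s=1}^{S^{(m)}}\theta^{(m)}_{rs}$ from~\eqref{pr}, the sum telescopes to $p^{(m)}_r + S^{(m)}\cdot \frac{p_r - p^{(m)}_r}{S^{(m)}} = p_r$, which is independent of $m$. Hence all left marginals of $\pi$ coincide (they all equal $p_r$), which is exactly membership in $\L$ as defined in~\eqref{eq:L}.

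For step (b), I would first identify $\L^\perp$ explicitly. A vector $\eta = (\eta^{(1)},\dots,\eta^{(M)})$ lies in $\L^\perp$ iff $\sum_{m,r,s}\eta^{(m)}_{rs}\pi^{(m)}_{rs}=0$ for all $\pi\in\L$; writing $\pi^{(m)}_{rs} = \frac{c_r}{S^{(m)}} + (\text{deviation with zero row-sum})$ one finds that $\L^\perp$ consists precisely of the blocks $\eta^{(m)}_{rs}$ that are constant in $s$ for each fixed $r$, say $\eta^{(m)}_{rs} = \lambda^{(m)}_r$, subject to the balancing relation $\sum_{m=1}^M \lambda^{(m)}_r = 0$ for every $r$. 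Now from~\eqref{row_sol} the residual is $(\pi-\theta)^{(m)}_{rs} = \frac{p_r - p^{(m)}_r}{S^{(m)}}$, which is indeed constant in $s$; so it remains only to check $\sum_{m=1}^M \frac{p_r - p^{(m)}_r}{S^{(m)}} = 0$. Expanding with $p_r = \sum_{j=1}^M a_j p^{(j)}_r$ and $a_m = \frac{1/S^{(m)}}{\sum_j 1/S^{(j)}}$, this becomes $\big(\sum_m \tfrac{1}{S^{(m)}}\big)p_r - \sum_m \tfrac{p^{(m)}_r}{S^{(m)}} = 0$, i.e. $p_r \sum_m \tfrac{1}{S^{(m)}} = \sum_m \tfrac{p^{(m)}_r}{S^{(m)}}$, which is exactly the definition of $p_r$ after dividing by $\sum_j 1/S^{(j)}$. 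This closes the argument.

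**Main obstacle.** The routine parts (the telescoping marginal sum and the weighted-average identity) are genuinely mechanical. The one place that requires a little care is the explicit characterization of $\L^\perp$: one must argue that $\L$, being defined by the equalities of successive left marginals, has orthogonal complement spanned exactly by blocks that are $s$-constant within each row and sum to zero across $m$. I would obtain this by noting $\L = \bigcap_{r} \L_r$ with $\L_r$ depending only on row $r$, reducing to a per-row computation, and then recognizing the per-row subspace as the kernel of the linear map sending $\pi$ to the vector of left-marginal differences; its complement is the row space of that map's matrix, whose rows are exactly the claimed $s$-constant, zero-sum-across-$m$ patterns. Alternatively, and perhaps more cleanly for the write-up, one can bypass $\L^\perp$ entirely: take an arbitrary $\sigma \in \L$, write $\inner{\pi - \theta}{\sigma - \pi}$, and show it vanishes using only that $\sigma$ and $\pi$ have equal left marginals together with the identity $\sum_m \frac{p_r - p^{(m)}_r}{S^{(m)}} = 0$ — this is the variational inequality characterization of the projection and avoids describing $\L^\perp$ at all.
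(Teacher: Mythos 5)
Your proposal is correct, and it takes a genuinely different route from the paper. You verify the stated formula a posteriori using the two characterizing properties of projection onto a linear subspace: membership in $\L$ (the telescoping marginal computation giving $\sum_s \pi^{(m)}_{rs}=p_r$ for every $m$) and orthogonality of the residual, where you correctly identify $\L^\perp$ as the blocks that are constant in $s$ within each row $r$ and sum to zero over $m$, the zero-sum condition reducing exactly to the weighted-average identity $p_r\sum_m 1/S^{(m)}=\sum_m p^{(m)}_r/S^{(m)}$ built into the definition of $p$. The paper instead derives the formula constructively: it writes the projection as a QP, decomposes it by rows, forms the Lagrangian of each row problem, and solves the resulting system for the multipliers $u^{(m)}$, obtaining both \cref{row_sol} and the expression for $p_r$ in \cref{pr} as outputs rather than as things to be checked. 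Your verification argument is shorter and cleaner given that the closed form is already displayed in the statement (and your variational alternative, pairing the residual against an arbitrary $\sigma\in\L$ and using only equality of left marginals plus the same zero-sum identity, avoids describing $\L^\perp$ altogether); the paper's Lagrangian derivation has the advantage of explaining where the weights $a_m$ and the averaged marginal $p$ come from, which is the interpretive core of the method. Both arguments are sound; if you keep the $\L^\perp$ route in a final write-up, you should include the short kernel/row-space argument you sketch (or switch to the variational version), since that identification is the only non-mechanical step.
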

\begin{proof}
First, observe that $\pi={\tt Proj}_{\L}(\theta)$ solves the QP problem
\begin{equation}
\left\{
\begin{array}{llllllll}
\displaystyle \min_{y^{(1)},\ldots,y^{(M)}} & \displaystyle \frac{1}{2} \sum_{m=1}^M \norm{y^{(m)}- \theta^{(m),k}}^2\\
\mbox{s.t} &  \sum_{s=1}^{S^{(m)}} y^{(m)}_{rs} = \sum_{s=1}^{S^{(m+1)}} y^{(m+1)}_{rs},&\; r=1,\ldots,R,\, m=1,\ldots,M-1,\\
%&\sum_{s=1}^{S^{(1)}} y^{(1)}_{rs} &= \sum_{s=1}^{S^{(2)}} y^{(2)}_{rs},&\; r=1,\ldots,R\\
%&\sum_{s=1}^{S^{(2)}} y^{(2)}_{rs} & = \sum_{s=1}^{S^{(3)}} y^{(3)}_{rs},&\; r=1,\ldots,R\\
%&&\vdots\\
%&\sum_{s=1}^{S^{(M-1)}} y^{(M-1)}_{rs} &= \sum_{s=1}^{S^{(M)}} y^{(M)}_{rs},&\; r=1,\ldots,R,
\end{array}
\right.
\end{equation}
which is only coupled 
by the ``columns" of $\pi$: there is no constraint linking $\pi^{(m)}_{rs}$ with $\pi^{(m')}_{r's}$ for $r\neq r'$ and $m$ and $m'$ arbitrary. Therefore, we can decompose it by rows: for $r=1,\ldots,R$, the $r^{th}$-row $(\pi^{(1)}_{r1},\ldots,\pi^{(1)}_{rS^{(1)}},\ldots,\pi^{(M)}_{r1},\ldots,\pi^{(M)}_{rS^{(M)}})$ of $\pi$ is the unique solution to the problem
% \[(\pi^{(1)}_{r1},\ldots,\pi^{(1)}_{rS^{(1)}},\ldots,\pi^{(M)}_{r1},\ldots,\pi^{(M)}_{rS^{(M)}})\] 
\begin{equation}\label{eq:proj_row}
\left\{
\begin{array}{lllllll}
\displaystyle \min_{w} & \displaystyle \frac{1}{2} \sum_{m=1}^M
\sum_{s=1}^{S^{(m)}} \Big(w^{(m)}_{s} - \theta^{(m)}_{rs}\Big)^2\\
\mbox{s.t} &  \sum_{s=1}^{S^{(m)}} w^{(m)}_{s} = \sum_{s=1}^{S^{(m+1)}} w^{(m+1)}_{s},\; m=1,\ldots,M-1. 
%&  \sum_{s=1}^{S^{(1)}} w^{(1)}_{s} = \sum_{s=1}^{S^{(2)}} w^{(2)}_{s}\\
%&\sum_{s=1}^{S^{(2)}} w^{(2)}_{s}  = \sum_{s=1}^{S^{(3)}} w^{(3)}_{s}\\
%&\vdots\\
%&\sum_{s=1}^{S^{(M-1)}} w^{(M-1)}_{s} = \sum_{s=1}^{S^{(M)}} w^{(M)}_{s}.
\end{array}
\right.
\end{equation}
%In the following developments we denote the $r^{th}$ row of $\theta$ by $ \theta^{k}_{r:}$ (in $\mathbb{R}^{\sum_{m=1}^M S^{(m)}}$), respectively. The (column) vector of ones in $\mathbb{R}^{S^{(m)}}$ is represented by $\mathbf{1}_{S^{(m)}}$. Accordingly, the above problem boils down to
%\begin{equation}\label{eq:proj_row_w}
%\left\{
%\begin{array}{llllllll}
%\displaystyle \min_{\pi^{(1)}_{r:},\ldots,\pi^{(S)}_{r:}} & \displaystyle \sum_{m=1}^M\frac{1}{2} 
%\norm{\pi^{(m)}_{r:} - \theta^{(m),k}_{r:}}^2\\
%\mbox{s.t} &  \pi^{(1)}_{r:}\mathbf{1}_{S^{(1)}} &=  \pi^{(2)}_{r:} \mathbf{1}_{S^{(2)}}&\qquad(u^{(1)})\\
%&\pi^{(2)}_{r:}\mathbf{1}_{S^{(2)}} & = \pi^{(3)}_{r:}\mathbf{1}_{S^{(3)}}&\qquad(u^{(2)})\\
%&&\vdots\\
%&\pi^{(M-1)}_{r:}\mathbf{1}_{S^{(M-1)}} &= \pi^{(M)}_{r:} \mathbf{1}_{S^{(M)}}&\qquad(u^{(S-1)}).
%\end{array}
%\right.
%\end{equation}
%The dual variables associate with every constraint above are given between parenthesis.
The Lagrangian function to this problem is, for a dual variable $u$, given by
\begin{equation}
L_r(w,u)= \displaystyle  \frac{1}{2} \sum_{m=1}^M
\sum_{s=1}^{S^{(m)}} \Big(w^{(m)}_{s} - \theta^{(m)}_{rs}\Big)^2
+  \sum_{m=1}^{M-1} u^{(m)}\Big( \sum_{s=1}^{S^{(m)}} w^{(m)}_{s} -  \sum_{s=1}^{S^{(m+1)}} w^{(m+1)}_{s} \Big).
\end{equation}

\noindent A primal-dual $(w,u)$ solution to problem~\cref{eq:proj_row} must satisfy the Lagrange system, in particular $\nabla_{w} L_r(w,u)=0$ with $w$ the $r^{th}$ row of $\pi={\tt Proj}_{\L}(\theta)$, that is,
\begin{equation}\label{eq:sytemL}
\left\{
\begin{array}{lclll}
    \pi^{(1)}_{rs}-\theta^{(1)}_{rs} & +& u^{(1)}&=0 & s=1,\ldots, S^{(1)}\\
    \pi^{(2)}_{rs}-\theta^{(2)}_{rs} & +& u^{(2)}-u^{(1)}&=0 & s=1,\ldots, S^{(2)}\\
    &\vdots& \\
\pi^{(M-1)}_{rs}-\theta^{(M-1)}_{rs} & + &u^{(M-1)}-u^{(M-2)}&=0 & s=1,\ldots, S^{(M-1)}
\\
\pi^{(M)}_{rs}-\theta^{(M)}_{rs} & -& u^{(M-1)}&=0 & s=1,\ldots, S^{(M)}.
\end{array}
\right.
\end{equation}
Let us denote $p_r= \sum_{s=1}^{S^{(m)}} \pi^{(m)}_{rs}$ (no matter $m\in \{1,\ldots,M\}$ because $\pi \in \L$), $p_r^{(m)}=\sum_{s=1}^{S^{(m)}} \theta^{(m)}_{rs}$ (the $r^{th}$ component of $p^{(m)}$ as defined in~\cref{pr}), 
and sum over $s$ the first row of system~\cref{eq:sytemL}  to get 
\begin{equation}
p_r- p_r^{(1)}+ u^{(1)}S^{(1)} = 0\quad \Rightarrow \quad  u^{(1)}=
\frac{ p_r^{(1)}-p_r}{S^{(1)}},
\end{equation}

\noindent Now, by summing the second row in \cref{eq:sytemL} over $s$  we get
\begin{equation}
p_r- p_r^{(2)}
 + u^{(2)}{S^{(2)}}-u^{(1)}{S^{(2)}} = 0\quad \Rightarrow \quad u^{(2)}
 =u^{(1)}+
 \frac{ p_r^{(2)}-p_r}{S^{(2)}}.
\end{equation}
By proceeding in this way and setting $u^{(0)}:=0$  we obtain
\begin{subequations}
\begin{align}\label{u_sol}
%u^{(1)} &= \frac{ p_r^{(1)} -p_r}{S^{(1)}}\\
  u^{(m)}
  &= u^{(m-1)}+ \frac{ p_r^{(m)} -p_r}{S^{(m)}}
  ,\quad m=1,\ldots,M-1.
  %\\u^{(M-1)} &= \frac{ p_r^{(M)} -p_r}{S^{(M)}}.
\end{align}
Furthermore, for $M-1$ we get the alternative formula
$u^{(M-1)} = -\frac{ p_r^{(M)} -p_r}{S^{(M)}}$.
\end{subequations}
Given these dual values, we can use \cref{eq:sytemL} to conclude that the $r^{th}$ row of $\pi ={\tt Proj}_\L(\theta)$ is given as in~\cref{row_sol}.
%\begin{equation}\label{row_sol_app}
%\begin{array}{lll}
%\pi^{(1)}_{rs} &:= \theta^{(1)}_{rs} - \frac{ p_r^{(1)}-p_r^k}{S^{(1)}},& \; s=1,\ldots,S^{(1)}\\
%\pi^{(2)}_{rs} &:= \theta^{(2)}_{rs} -\frac{ p_r^{(2)}-p_r}{S^{(2)}},& \;  s=1,\ldots,S^{(2)}\\
%&\vdots\\
%\pi^{(M-1)}_{rs} &:= \theta^{(M-1)}_{rs} -\frac{ p_r^{(M-1)}-p_r}{S^{(M-1)}}\\
%\pi^{(M)}_{rs} &:= \theta^{(M)}_{rs} -\frac{ p_r^{(M)} -p_r}{S^{(M)}},& \; s=1,\ldots,S^{(M)}.
%\end{array}
%\end{equation}
%
It is remaining to show that $p_r= \sum_{s=1}^{S^{(m)}} \pi^{(m)}_{rs}$, as defined above, is alternatively given by~\cref{pr}.
To this end, observe that $u^{(M-1)} =u^{(M-1)}-u^{(0)}=\sum_{m=1}^{M-1}(u^{(m)}-u^{(m-1)})$, so:
    
% \begin{align}
%\nonumber %\\
%&
% \end{align}

\begin{equation} \label{app-aux0}
u^{(M-1)} = \sum_{m=1}^{M-1} \Big( \frac{ p_r^{(m)} -p_r}{S^{(m)}}\Big) =\sum_{m=1}^{M-1}  \frac{ p_r^{(m)} }{S^{(m)}}
-p_r\sum_{m=1}^{M-1}  \frac{1}{S^{(m)}}.
\end{equation}

\noindent Recall that $u^{(M-1)} = \frac{p_r- p_r^{(M)} }{S^{(M)}}$, i.e.,
$ 
p_r= p_r^{(M)} + u^{(M-1)}S^{(M)}.
$ 
Replacing $u^{(M-1)}$ with the expression~\cref{app-aux0} yields
% \begin{align*}
% p_r & = S^{(M)}\Big[\frac{p_r^{(M)}}{S^{(M)}} + u^{(M-1)}\Big] \\
%  &= S^{(M)}\Big[\frac{p_r^{(M)}}{S^{(M)}} + \sum_{m=1}^{M-1}  \frac{ p_r^{(m)} }{S^{(m)}}
% -p_r\sum_{m=1}^{M-1}  \frac{1}{S^{(m)}}\Big],
% \end{align*}
\begin{equation}
p_r  = S^{(M)}\Big[\frac{p_r^{(M)}}{S^{(M)}} + u^{(M-1)}\Big] 
 = S^{(M)}\Big[\frac{p_r^{(M)}}{S^{(M)}} + \sum_{m=1}^{M-1}  \frac{ p_r^{(m)} }{S^{(m)}}
-p_r\sum_{m=1}^{M-1}  \frac{1}{S^{(m)}}\Big],
\end{equation}
which implies $p_r \sum_{m=1}^{M}  \frac{1}{S^{(m)}} = \sum_{m=1}^{M} \Big( \frac{ p_r^{(m)} }{S^{(m)}}\Big)$.
% \begin{align*}
% p_r \sum_{m=1}^{M}  \frac{1}{S^{(m)}}&= \sum_{m=1}^{M} \Big( \frac{ p_r^{(m)} }{S^{(m)}}\Big). %\label{app-aux1}
% \end{align*}
Hence, $p$ is as given in~\cref{pr}, and the proof is complete. 
\end{proof}

Note that projection can be computed in parallel over the rows, and the average $p$ of the marginals $p^{(m)}$ is the gathering step between parallel processors.
 
%\noindent The proof is given in the \cref{prof_step_1}.

\subsection{Evaluating the Proximal Mapping of Transportation Costs} \label{sec:proj_simplex}
In this subsection we turn our attention to the DR algorithm's second step, which requires solving a convex optimization problem of the form: $\min_{\pi }\; f(\pi)+\frac{\rho}{2} \norm{\pi -y}^2$ (see \cref{DR}).
%(with $y=2\,{\tt Proj}_\L(\theta)-\theta$ for the balanced case):
% \[
% \min_{\pi }\; f(\pi)+\frac{\rho}{2} \norm{\pi -y}^2.
% \]
Given the additive structure of $f$ in~\cref{pbm-b}, the above problem can be decomposed into $M$ smaller ones
\begin{equation}\label{eq:prox-s}
\min_{\pi^{(m)} }\; f^{(m)}(\pi^{(m)})+\frac{\rho}{2} \norm{\pi^{(m)} -y^{(m)}}^2, \quad  m=1,\ldots,M.
\end{equation}
%Because of the definition of $f(\pi)$, the minimization problem can be decomposed over $M$ minimizations of $f^{(m)}(\pi^{(m)})$. 
Then looking closely at every subproblem above, we can see that we can decompose it even more: 
the columns of the the transportation plan $\pi^{(m)}$ are independent in the minimization. 
Besides, as the following result shows, every column optimization is simply the projection of an $R$-dimensional vector onto the simplex $\Delta_R$.

\begin{proposition} \label{prop_3}
Let $\Delta_R(\tau)$ be as in~\cref{eq:Delta}.     
The minimization $\hat \pi:=\min_{\pi }\; f(\pi)+\frac{\rho}{2} \norm{\pi -y}^2$
% \[\hat \pi:=\min_{\pi }\; f(\pi)+\frac{\rho}{2} \norm{\pi -y}^2 \]
can be performed exactly, in parallel along the columns of each transport plan $y^{(m)}$, as follows: %the projection of each column onto the simplex $\Delta_R$:
for  all $m \in \{1,\ldots,M\}$,
\begin{equation}\label{eq:proj-s}
    \begin{pmatrix}\hat \pi^{(m)}_{1s}\\ \vdots \\ \hat \pi^{(m)}_{Rs}\end{pmatrix}=\,{\tt Proj}_{\Delta_R(q_s^{(m)})}
    \begin{pmatrix}
     y_{1s} - \frac{1}{\rho}c^{(m)}_{1s}\\
     \vdots  \\ 
     y_{Rs} - \frac{1}{\rho}c^{(m)}_{Rs} \end{pmatrix},
    \quad s=1,\ldots, S^{(m)}.
\end{equation}
\end{proposition}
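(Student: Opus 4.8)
The plan is to start from the decomposition already noted: because $f(\pi)=\sum_{m=1}^M f^{(m)}(\pi^{(m)})$ and the quadratic penalty $\frac{\rho}{2}\norm{\pi-y}^2=\sum_{m=1}^M\frac{\rho}{2}\norm{\pi^{(m)}-y^{(m)}}^2$ separates across $m$, the proximal subproblem splits into the $M$ independent problems~\cref{eq:prox-s}. So it suffices to analyze a single $m$ and show that \cref{eq:prox-s} reduces to the column-wise simplex projections in~\cref{eq:proj-s}.

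Fix $m$ and expand $f^{(m)}$ using~\cref{eq:fm}: the subproblem is $\min_{\pi^{(m)}\ge 0}\ \sum_{r,s} d^{(m)}_{rs}\pi^{(m)}_{rs}+\frac{\rho}{2}\sum_{r,s}(\pi^{(m)}_{rs}-y^{(m)}_{rs})^2$ subject to the right-marginal constraints $\sum_r \pi^{(m)}_{rs}=q^{(m)}_s$ for each $s$. The key structural observation is that both the objective (a sum over $(r,s)$) and the constraint set (the non-negativity $\pi^{(m)}_{rs}\ge0$ and the marginal equalities, each of which involves only a single column index $s$) are completely separable across the columns $s=1,\dots,S^{(m)}$. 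Hence the problem decomposes into $S^{(m)}$ independent problems, one per column: for each $s$, minimize $\sum_{r=1}^R\big(d^{(m)}_{rs}\pi_{rs}+\frac{\rho}{2}(\pi_{rs}-y_{rs})^2\big)$ over $\pi_{\cdot s}\in\R^R$ with $\pi_{rs}\ge0$ and $\sum_r\pi_{rs}=q^{(m)}_s$, i.e.\ over $\pi_{\cdot s}\in\Delta_R(q^{(m)}_s)$.

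Then I would complete the square on each column objective: $\sum_{r}\big(\frac{\rho}{2}(\pi_{rs}-y_{rs})^2+d^{(m)}_{rs}\pi_{rs}\big)=\frac{\rho}{2}\sum_r\big(\pi_{rs}-(y_{rs}-\tfrac{1}{\rho}d^{(m)}_{rs})\big)^2+\text{const}$, where the additive constant does not depend on $\pi_{\cdot s}$. Therefore minimizing over $\Delta_R(q^{(m)}_s)$ is exactly the Euclidean projection of the shifted vector $\big(y_{rs}-\tfrac1\rho d^{(m)}_{rs}\big)_{r=1}^R$ onto $\Delta_R(q^{(m)}_s)$, which is precisely~\cref{eq:proj-s}. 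Uniqueness of $\hat\pi$ follows from strong convexity of the quadratic and compactness/convexity of each $\Delta_R(q^{(m)}_s)$ (this also matches the uniqueness already asserted for $\hat\pi^{k+1}$ in \cref{theo:DR}). Assembling the $\hat\pi^{(m)}_{\cdot s}$ over all $s$ and all $m$ gives the full proximal point, and all these projections are mutually independent, hence parallelizable along columns.

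The only mild subtlety — hardly an obstacle — is justifying rigorously that the separation across columns is legitimate: one must note that the feasible set of the full subproblem~\cref{eq:prox-s} is the Cartesian product $\prod_{s=1}^{S^{(m)}}\Delta_R(q^{(m)}_s)$ under the natural identification of $\pi^{(m)}$ with its tuple of columns, and that the objective is a sum of functions each depending on a single column; minimizing a separable function over a product set decomposes block-wise. This is a standard fact but worth stating so that the claimed exactness of the parallel scheme is airtight. No constraint qualification or Lagrangian argument is actually needed here since each column problem is just a projection onto a nonempty compact convex set.
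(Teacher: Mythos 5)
Your proof is correct and follows essentially the same route as the paper's: split the proximal problem over the measures $m$ via the additive structure of $f$, then decompose each subproblem over the columns $s$ (since each marginal constraint involves a single column), and identify each column problem as a projection onto $\Delta_R(q_s^{(m)})$. The only difference is cosmetic: you spell out the completion of the square and the product-set separability argument that the paper leaves implicit in its ``which is nothing but~\cref{eq:proj-s}'' step.
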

\begin{proof}
 It has already been argued that evaluating this proximal mapping  into $M$ smaller subproblems \cref{eq:prox-s}, which is a quadratic program problem due to the definition of $f^{(m)}$ in~\cref{eq:fm}:
 \begin{equation}
\displaystyle \min_{\pi^{(m)}\geq 0} \; \sum_{r=1}^R \sum_{s=1}^{S^{(m)}} \Big[c^{(m)}_{rs}\pi^{(m)}_{rs} + \frac{\rho}{2} \left|\pi^{(m)}_{rs}-y^{(m)}_{rs})\right|^2\Big]\quad 
\mbox{s.t.} \quad \sum_{r=1}^R \pi^{(m)}_{rs}=q^{(m)}_s,\; s=1, \ldots,S^{(m)}.
\end{equation}
By taking a close look at the above problem, we can see that the objective function is decomposable, and the constraints couple only the ``rows" of $\pi^{(m)}$. Therefore, we can go further and decompose the above problem per columns: for $s=1,\ldots,S^{(m)}$, the $s^{th}$-column of $\hat \pi^{(m)}$ 
%(\hat \pi^{(m),k+1}_{1s},\ldots,\hat \pi^{(m),k+1}_{Rs})$ 
is the unique solution to the $R$-dimensional problem
\begin{equation}
\displaystyle \min_{w\geq 0} \;
\displaystyle \sum_{r=1}^R \Big[c^{(m)}_{rs}w_{r} + \frac{\rho}{2} (w_{r}-y^{(m)}_{rs})^2\Big] \quad
\mbox{s.t.} \quad \sum_{r=1}^R w_r=q^{(m)}_s,
\end{equation}
which is nothing but \cref{eq:proj-s}. Such projection can be performed exactly~\cite{Condat_2016}.
%The latter is just the projection of the $R$-dimensional vector $w^s$ given in~\cref{w_s}
%onto the simplex. Hence, solving \cref{eq:prox-s} amounts to (independently) project  $S^{(m)}$ vectors of dimension $R$ onto $\Delta_R$, which leads us to the conclusion that computing $\hat \pi^{k+1}$ in the second step of DR requires $\sum_{m=1}^M S^{(m)}$ projections onto $\Delta_R$.
%\qed
\end{proof}
%As already mention in Section~\ref{sec:background}, if $\tau=1$, then $\Delta_R(\tau)$ is the $R+1$ simplex  $\Delta_R$.
\begin{remark}\label{rem:ProjSimplex}
If $\tau=0$, then $\Delta_R(\tau)=\{0\}$ and the projection onto this set is trivial. 
Otherwise, $\tau >0$ and computing $\mathtt{Proj}_{\Delta_R(\tau)}(w)$ amounts to projecting onto the
$R+1$ simplex $\Delta_R$: $\mathtt{Proj}_{\Delta_R(\tau)}(w) \quad = \quad \tau\,\mathtt{Proj}_{\Delta_R}(w/\tau)$.
% \[
% \mathtt{Proj}_{\Delta_R(\tau)}(w) \quad := \quad \tau\,\mathtt{Proj}_{\Delta_R}(w/\tau).
% \]
The latter task can be performed \emph{exactly} by using efficient methods \cite{Condat_2016}.
Hence, evaluating the proximal mapping in \cref{prop_3} decomposes into $T$ independent projections onto $\Delta_R$.
\end{remark}

%%%%%%%%%%%%%%% COMMMENTED%%%%%%%%%%%%%%%%%%
\if{ 
\noindent The proof is given in details in the \cref{proof_step_2}.

\noindent It is worth noting that the projection onto the simplex is a nonlinear operation and can be computationally expensive for large-scale problems, but we deal with a very parallelisable problem, this enable us to remain effective. We would refer to L. Condat [REFERENCE ALGO EN C JE METS L'URL ? OU EN BIBLIO ? https://lcondat.github.io/software.html, l'article ? Fast projection onto the simplex and the ll1
 ball OU LES 2 ?], who developped an exact algorithm to compute the pojection in a very competitive complexity.

\subsection{Free support Wasserstein barycenter generalization} \label{subsec:free-WB}
In a context of a more general Wasserstein barycenter problem where the support of $\xi$ is not fixed, the optimization problem can be written as :
\begin{equation}\label{eq:WB2}
\min_{p \in \Delta_R, \xi}\;\sum_{m=1}^M \alpha_m{\tt OT}(p,q^{(m)};{\xi,\zeta^{(m)}}),
\end{equation}
where ${\tt OT}(\cdot;\cdot)$ is given in~\cref{OT}.
Within this framework, the scenarios $\{\xi_1,\ldots,\xi_R\}$ for $\xi$ are decision variables. To handle this more general problem, we will assume that the metric function $d$ is given by $d(\xi_r,\zeta_s)= \frac{1}{2}\|\xi_r-\zeta_s\|^2$ and use the standard technique (in the clustering and scenario reduction literature) of minimizing the objective in~\cref{eq:WB2} in two iterative steps repeated until no further improvement is obtained: first solve on $p$ with fixed support using the previous section theory, then solve on $\xi$ with fixed probabilities (say \textit{weights} in the unbalanced case). The latter step can be derived in an explicit way.

\noindent \textbf{Proposition 4.} 
The unique solution of the scenario/support optimization problem, written as
\begin{equation}\label{opti_support}
    \xi =\arg\min_{\xi }\sum_{m=1}^M \alpha_m{\tt OT}(p,q^{(m)};{\xi,\zeta^{(m)}})
\end{equation}
is given by:
\[
\xi_r :=  \frac{\sum_{m=1}^M \alpha_m  \sum_{s=1}^{S^{(m)}} \zeta^{(m)}_s\pi_{rs}^{(m)}}{ \sum_{m=1}^M \alpha_m  \sum_{s=1}^{S^{(m)}}\pi_{rs}^{(m)}}, \quad r=1,\ldots,R.
\]

\noindent \textit{Proof.} 
Let $\pi$ be the transportation plan associated to the weights $p$ (\textit{probabilities} in the balanced case).
Observe that problem~\cref{opti_support} can be written as
\[
\min_{\xi }\; \Psi(\xi), \quad \mbox{with}\quad \Psi(\xi):=\sum_{m=1}^M \alpha_m \sum_{r=1}^R \sum_{s=1}^{S^{(m)}}\frac{1}{2}\|\xi_r-\zeta^{(m)}_s\|^2\pi_{rs}^{(m)}.
\]
The function $\Psi(\cdot)$ is strictly convex and coercive. Hence, the above problem has a unique solution $\xi$, which can be computed by solving the system $\nabla \Psi(\xi)=0$.
To this end, note that for $r=1,\ldots,R$,
\[
\frac{\partial \Psi}{\partial \xi_r}(\xi)=  \sum_{m=1}^M \alpha_m  \sum_{s=1}^{S^{(m)}} (\xi_r-\zeta^{(m)}_s)\pi_{rs}^{(m)}.
\]
Therefore, $\frac{\partial \Psi}{\partial \xi_r}(\xi)=0$ implies the stated formula.\qed

%\newpage
}\fi

\subsection{The Method of Averaged Marginals (MAM)} \label{sec:MAM}

%Putting \cref{prop_2,prop_3} together with the general lines of DR algorithm \cref{DR} and rearranging terms we provide below an easy-to-implement and memory efficient algorithm  for computing barycenters. The pseudo code for this algorithm is presented in \cref{alg}.  
%The algorithm gathers the DR's three main steps and integrates an option in case the problem is unbalanced, since treating the Wasserstein barycenter problem the way we did, enables to easily switch from the balanced to the unbalanced case.
%Note that part of the first DR step has been placed at the end of the \emph{while-loop} iteration in a storage optimization purpose that will be discussed in the following paragraphs. In the following algorithm, the vector $\alpha \in \Delta_M$ of weights is included in the distance matrix definition, as done in \cref{eq:fm}. Some comments on \cref{alg} are in order. Some comments on \cref{alg} are in order.

\rev{Our approach is presented in Algorithm~\ref{alg}, which gathers the three main steps from the DR algorithm and integrates a choice of $\gamma$ for a simple switch between the balanced and unbalanced cases.}

\begin{algorithm}[htb]
\caption{\sc Method of Averaged Marginals - MAM }
  \label{alg}
\begin{algorithmic}[1]

\Statex \Comment{Step 0: input}
\State Given $\rho>0$, the \rev{cost matrix} and initial point $c,\theta^{0} \in \Re^{R\times T}$, and $a \in \Delta_M$ as in~\cref{pr},  set $k\gets 0$ and $p_r^{(m)} \gets  \sum_{s=1}^{S^{(m)}}\theta_{rs}^{(m),0}$, $r=1,\ldots,R$,  $m=1,\ldots,M$
\State Set $\gamma\gets \infty$ if $q^{(m)}\in  \R^{S^{(m)}}_+$, $m=1,\ldots,M$, are balanced; otherwise, choose $\gamma \in [0,\infty)$ 
%\State Set $\mathtt{Unbalanced}\in \{\mathtt{false},\mathtt{true}\}$ depending on  $q^{(m)}\in  \R^{S^{(m)}}_+$, $m=1,\ldots,M$,
%\State Choose $\gamma>0$ if $\mathtt{Unbalanced}$
%
\Statex  
\While{not converged}
\Statex \Comment{Step 1: average the marginals}
\State Compute  $p^k\gets \sum_{m=1}^M a_mp^{(m)}$
\Statex  
\State Set $t^k=1$ if ${\rho\,\sqrt{\sum_{m=1}^M \frac{\norm{p^k- p^{(m)}}^2}{S^{(m)}}}}\leq \gamma$; otherwise, $t^k\gets {\gamma}/\left({\rho\,\sqrt{\sum_{m=1}^M \frac{\norm{p^k- p^{(m)}}^2}{S^{(m)}}}} \right)$ \label{line:tk}

%\If{$\mathtt{Unbalanced}$ }
%\State Set $t^k\gets \min\left\{1,\,\frac{\gamma}{\rho\,\sqrt{\sum_{m=1}^M \frac{\norm{p^k- p^{(m)}}^2}{S^{(m)}}}}\right\}$
%\Else 
%\State Set $t^k \gets 1$
%\EndIf  
\Statex 
\State Choose an index set $\emptyset\neq  \mathcal{M}^k \subseteq \{1,\ldots,M\}$ \label{line:random}
\For{$m \in \mathcal{M}^k$} \label{line:loop-m}
\Statex\Comment{Step 2: update the $m^{th}$ plan}
\For{$s=1,\ldots,S^{(m)}$} %\Comment{Parallel in $m$ and $s$}
\State Define $w_r \gets  \theta^{(m),k}_{rs} +2\,t^k\,\frac{p_r^k-p_r^{(m)}}{S^{(m)}} - \frac{1}{\rho}c^{(m)}_{rs}$,  $r=1,\ldots, R$\label{line:w}

\State Compute $(\hat \pi^{(m)}_{1s},\ldots,\hat \pi^{(m)}_{Rs}) \gets {\tt Proj}_{\Delta_R( q^{(m)}_s)}(w)$\label{line:proj}

%\State Set $\Delta = \max\{\Delta,|\theta_{rs}^{(m),k}+\frac{p_r^k-p_r^{(m),r}}{S^{(m)}} - \hat \pi^{(m)}_{rs} |\}$
\State Update   
$\theta^{(m),k+1}_{rs} \gets \hat \pi^{(m)}_{rs} - t^k\frac{p_r^k-p_r^{(m)}}{S^{(m)}}$,  $r=1,\ldots, R$\label{line:k+1}

\EndFor
\Statex \Comment{Step 3: update the $m^{th}$ marginal}
\State Update $p^{(m)}_r \gets  \sum_{s=1}^{S^{(m)}}\theta_{rs}^{(m),k+1}$, $r=1,\ldots,R$
\EndFor
\Statex
\EndWhile

\State Return $\bar p \gets p^k$ 

  \end{algorithmic}
\end{algorithm}

\paragraph{MAM's interpretation}
At every iteration, the barycenter approximation $p^k$ is a weighted average of the $M$  marginals $p^{(m)}$ of the plans $\theta^{(m),k}$, $m=1,\ldots,M$. As we will shortly see, the whole sequence $\{ p^k \}$ converges (almost surely or deterministically) to a barycenter upon specific assumptions on the choice of the index set at line~\ref{line:random} of \cref{alg}.
%Indeed at every iteration $p^{(m)} \in \mathbb{R}^R$ can be understood as an approximation of the one of the marginals of the transport plan $\theta^{(m),k}$. 
%Note that even if $p^{(m)}$ is an approximation, since at early iterations of the algorithm it can have negative components while being a probability (in the case \textit{balanced} for example) - due to the structure of the subspace $\L$ defined in \cref{eq:L} - the whole sequence $\{ p^k \}$ converges to an exact barycenter (i.e. an exact solution of \cref{HugeLP}) as demonstrated in \cref{proposition_1}. 

\paragraph{Initialization}
The choices for $\theta^0 \in \Re^{R\times T}$ and $\rho>0$ are arbitrary ones. 
The prox-parameter $\rho>0$ is borrowed from the DR algorithm, which is known to have an impact on the 
practical convergence speed. Therefore, $\rho$
should be tuned for the set of distributions at stakes. 
Some heuristics for tuning this parameter exist for other methods derived from the DR algorithms \cite{Watson_Woodruff_2010,ADMM_Figueiredo_2017} and can be adapted to the setting of \cref{alg}. 

%\wlo{Daniel, your comments on the initialization of $\theta$ do not seem correct: you are mixing up the roles of $\theta$ and $\pi$? }
%As for the initialization of $\theta$, 
%the closer we pick $\theta$ of a solution of~\cref{}
%the closest $p$ is to a solution, the fastest the algorithm will converge to this solution. Therefore in practice this algorithm can be initialized thanks to a quick but non-exact algorithm (see \cref{qualitative_sec} and \cref{quantitative_section} using IBP for example) and converge faster than nominal use (say a random or uniform initialization) to the exact solution of the problem. \\

\paragraph{Stopping criteria}
A possible stopping test is  $\norm{\theta^{k+1}-\theta^k}_\infty \leq \tol $, where $\tol>0$ is a given tolerance. 
%In practical terms, this test boils down to checking whether $|\theta^{(m),k}_{rs} + t^k(p^k_r-p^{(m)}_r)/S^{(m)} - \hat \pi_{rs}|\leq \tol$, for all $r=1,\ldots,R$, $s=1,\ldots,S^{(m)}$, and all $m=1,\ldots,M$. 
Alternatively, we may stop the algorithm when $\norm{p^{k+1}-p^k}\leq \tol$. \rev{or $\textnormal{dist}_B(\hat \pi^{k}) \leq \tol$. These latter tests should be understood as heuristic criteria.}
%\rev{It is natural to derive a possible stopping criterion in the evaluation of $\textnormal{dist}_B(\hat \pi^k)$ from the convergence analysis developped below. These criteria are tested and evaluated in \cref{free_support_a}.}

\paragraph{Deterministic and random variants of MAM}
The most computationally expensive step of MAM is Step 2, which requires a series of independent projections onto the $R+1$ simplex (see \cref{rem:ProjSimplex}).
Our approach underlines that this step can be 
conducted in parallel over $s$ or, if preferable, over the 
measures $m$. As a result, it is a natural idea to derive a 
randomized variant of the algorithm. This is the reason for 
having the possibility of choosing an index set $\mathcal{M}^k\subsetneq \{1,\ldots,M\} $ at 
line~\ref{line:random} of \cref{alg}.
For example, we may employ an economical rule and choose $\mathcal{M}^k=\{m\}$ randomly (with a fixed and positive probability, e.g. $\alpha_m$) at every iteration, or the costly one  $\mathcal{M}^k=\{1,\ldots,M\}$ for all $k$. The latter yields the deterministic method of averaged marginals, while the former gives rise to a randomized variant of MAM.
Depending on the computational resources, intermediate choices between these two extremes can perform better in practice. 
\begin{remark}\label{rem:random}
Suppose that $1<{\tt nb}< M$ processors are available. We may then create a partition $A_1,\ldots, A_{\tt nb}$ of the set $\{1,\ldots,M\}$ ($=\cup_{i=1}^{\tt nb}A_i$) and define weights $\beta_i:= \sum_{m \in A_i} \alpha_m >0$. Then, at every iteration $k$, we may draw with probability $\beta_i$ the subset $A_i$ of measures and set $\mathcal{M}^k= A_i$.
\end{remark}
% a possible way to proceed would be to randomly pick one $\pi^{(m)}$ per processor and update its value. 
This randomized variant would enable the algorithm to compute more iterations per time unit but with less precision per iteration (since not all the marginals $p^{(m)}$ are updated). Such a randomized variant of MAM is benchmarked against its deterministic counterpart in \Cref{quantitative_section}, where we demonstrate empirically that with certain configurations (depending on the number $M$ of probability distributions and the number of processors) this randomized algorithm can be effective.
We highlight that other choices for $\mathcal{M}^k$ rather than randomized ones or the deterministic rule $\mathcal{M}^k=\{1,\ldots,M\}$ should be understood as heuristics. Within such a framework, one may choose $\mathcal{M}^k \subsetneq \{1,\ldots,M\}$ deterministically, for instance cyclically or yet by the discrepancy of the marginal $p^{(m)}$ with respect to the average $p^k$.

\paragraph{Storage complexity} \label{storage}
Note that the operation at line~\ref{line:proj} is trivial if $q_s^{(m)}=0$. This motivates us to remove all the zero components of $q^{(m)}$ from the problem's data, and consequently, all the columns $s$ of the cost matrix $c^{(m)}$ and variables $\theta,\hat \pi$ corresponding to $q^{(m)}_s=0$, $m=1,\ldots,M$. In some applications (e.g. general sparse problems), this strategy significantly reduces the WB problem and thus memory allocation, since the non-taken columns are both not stored and not treated in the \textit{for loops}. This remark raises the question of how sparse data impacts the practical performance of MAM.  \Cref{parametric_section} conducts an empirical analysis on this matter. 

%Besides, by removing columns of $d^{(m)}$ and $\theta^{(m)}$, the memory allocations is reduced. Indeed, 
In nominal use, the algorithm needs to store the decision variables $\theta^{(m)} \in \mathbb{R}^{R \times S^{(m)}}$ for all $m=1,\ldots,M$ (transport plans for every measure), along with $M$ distance matrices $c \in \mathbb{R}^{R \times S^{(m)}}$, one barycenter approximation $p^k \in \mathbb{R}^R$, $M$ approximated marginals $p^{(m)}\in \mathbb{R}^R$ and $M$ marginals $q^{(m)} \in \R^{S(m)}$. Note that in practical terms, the auxiliary variables $w$ and $\hat \pi$ in \cref{alg} can be easily removed from the algorithm's implementation by merging lines~\ref{line:w}-\ref{line:k+1} into a single one. 
Hence, by letting $T=\sum_{m=1}^MS^{(m)}$, the method's memory allocation is $2R\,T+T+M(R+1)$  floating-points. This number can be reduced if the measures share the same cost matrix, i.e., $c^{(m)}=c^{(m')}$ for all $m,m'=1,\ldots,M$. In this case, $S^{(m)}=S$ for all $m$, $T=M\,S$ and the method's memory allocation drops to $R\,T+R\,S+T+M(R+1)$  floating-points. In the light of the previous remark this memory complexity should be treated as an upper bound: the sparser the data the less memory will be needed.

\rev{
\paragraph{Computation complexity}
Step 2 of the algorithm involves two main components: projection onto $\L$, which comprises straightforward operations detailed in \Cref{sec:proj}, and projection onto $\Pi$, which relies on leveraging the simplex projection technique discussed in \Cref{sec:proj_simplex}. For each probability measure, the projection onto $\L$ requires  $3R S^{(m)}$ computation operations, where $R$ is the barycenter support size and $S^{(m)}$ denotes the support size of the probability measure $m$, which undergoes iteration over its columns. On the other hand, the simplex projection (line~\ref{line:proj}) is computationally more intensive. This is due to the adoption of a state-of-the-art algorithm proposed by Condat \cite{Condat_2016}, which operates in $\grO(R\log(R))$. Therefore, the complexity of $S^{(m)}$ times line~\ref{line:proj} amounts to $\grO(S^{(m)} R\log(R))$. Deriving the precise number of computational operations is challenging due to the use of a sorting algorithm in \cite{Condat_2016}, the complexity of which depends on the characteristics of the input data, hence the asymptotic complexity estimation. Note that Step 2 must be executed for the $M$ probability measures, but these operations can be performed in parallel (multiprocessing).

}

\paragraph{Balanced and unbalanced settings}
As already mentioned, our approach can handle both balanced and unbalanced WB problems. All that is necessary is to choose a finite (positive) value for the parameter $\gamma$ in the unbalanced case. Such a parameter is only used to define $t^k \in (0,1]$ at every iteration. Indeed, \cref{alg} defines $t^k=1$ for all iterations if the WB problem is balanced (because $\gamma=\infty$ in this case)\footnote{Observe that line~\ref{line:tk} can be entirely disregarded in this case, by setting $t^k=t=1$ fixed at initialization.}, and $t^k={\gamma}/\left({\rho\,\sqrt{\sum_{m=1}^M \frac{\norm{p^k- p^{(m)}}^2}{S^{(m)}}}}\right)$ otherwise. This rule for setting up $t^k$ is a mere artifice to model \cref{eq:Unbalanced-prox}. 
Indeed, $\mathtt{dist}_\L(\theta^k) = \norm{ \mathtt{Proj}_\L(\theta^k)-\theta^k}$ reduces to $\sqrt{\sum_{m=1}^M \frac{\norm{p^k- p^{(m)}}^2}{S^{(m)}}}$ thanks to \cref{prop_2}.

%To see that, observe that \cref{eq:Unbalanced-prox} can be rewritten as 
%\begin{equation}\label{pik+1}
%\pi^{k+1} = 
%\theta^k +\min\left\{1,\,\frac{\gamma}{\rho\,\mathtt{dist}_\L(\theta^k)}\right\}(\mathtt{Proj}_{\L}(\theta^k)-\theta^k)
%\end{equation}
%by adopting the convection that $\gamma/0:=\infty$ for all $\gamma\geq 0$. Proposition~\ref{prop_2} asserts that the projection of $\theta^k$ onto the balanced subspace $\L$ is given by
%\[
%\theta^{(m),k}_{rs} +\frac{(p^k_r-p^{(m)}_r)}{S^{(m)}},\quad s=1,\ldots,S^{(m)},\;r=1,\ldots,R,\; m=1,\ldots,M.
%\]
%Thus, $\mathtt{dist}_\L(\theta^k) = \norm{ \mathtt{Proj}_\L(\theta^k)-\theta^k}$ reduces to $\sqrt{\sum_{m=1}^M \frac{\norm{p^k- p^{(m)}}^2}{S^{(m)}}}$ and thus $t^k$ in Algorithm~\ref{alg} represents $\min\left\{1,\,\frac{\gamma}{\rho\,\mathtt{dist}_\L(\theta^k)}\right\}$.
%Note that in the unbalanced case, $\mathtt{dist}_\L(\theta^k) >0$ for all $k\geq 1$ and thus $t^k$ is well defined for all $k$ provided the algorithm is started with $\theta^0 \not \in \L$.  
\paragraph{Convergence analysis}
The convergence analysis of \cref{alg} can be summarized as follows.
\begin{theorem}[MAM's convergence analysis]
\begin{itemize}
    \item [a)] (Deterministic MAM.) Consider \cref{alg} with the choice $\mathcal{M}^k=\col{1,\ldots,m}$ for all $k$. Then the sequence of points $\{p^k\}$ generated by the algorithm converges to a point $\bar p$. If the measures are balanced, 
%    \[\sum_{j=1}^{S(m)}q_j^{(m)}=
%\sum_{j=1}^{S(m')}q_j^{(m')}, \quad m\neq m';\]
    then $\bar p$ is a balanced WB; otherwise, $\bar p$ is a $\gamma$-unbalanced WB.
\item [b)] (Randomized MAM.) Consider \cref{alg} with the choice $\mathcal{M}^k\subset \col{1,\ldots,m}$
as in \cref{rem:random}.
%, where, for all $k$,  every index $m$ in the latter set is chosen randomly with a probability that is fixed and strictly positive $m$. 
Then the sequence of points $\{p^k\}$ generated by the algorithm converges almost surely to a point $\bar p$. If the measures are balanced, 
    then $\bar p$ is almost surely a balanced WB; otherwise, $\bar p$ is almost surely a $\gamma$-unbalanced WB.
\end{itemize}
\end{theorem}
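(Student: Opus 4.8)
The plan is to observe that \cref{alg} is, line by line, a re-arrangement of the Douglas-Rachford recursion~\cref{DR} applied to problem~\cref{pbm} when $\mathcal{M}^k=\{1,\ldots,M\}$ for every $k$, and of its block-randomized variant~\cref{DR-random} when $\mathcal{M}^k$ is drawn at random as in \cref{rem:random}; the two claimed convergences then follow from \cref{theo:DR} and \cref{theo:DR-random} once the produced limit $\bar p$ is identified with a barycenter via \cref{prop_2} and \cref{def:WB,def:UWB}. Throughout, $g$ denotes the function of~\cref{pbm-b}, i.e. $g=\ind_\L$ in the balanced case and $g=\gamma\,\mathtt{dist}_\L$ in the unbalanced one.

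First I would verify the identity \cref{alg}$\,\equiv\,$\cref{DR} for part a). Identifying $\theta^{(m),k}$ of \cref{alg} with $\theta^k$ of~\cref{DR} and setting $p^{(m)}_r=\sum_s\theta^{(m),k}_{rs}$, $p^k=\sum_m a_m p^{(m)}$, \cref{prop_2} gives $(\mathtt{Proj}_\L(\theta^k))^{(m)}_{rs}=\theta^{(m),k}_{rs}+(p^k_r-p^{(m)}_r)/S^{(m)}$ and $\mathtt{dist}_\L(\theta^k)=\sqrt{\sum_m\|p^k-p^{(m)}\|^2/S^{(m)}}$. Hence the first DR iterate --- the prox of $g$, equal to $\mathtt{Proj}_\L(\theta^k)$ in the balanced case and given by~\cref{eq:Unbalanced-prox} in the unbalanced one --- is exactly $\pi^{(m),k+1}_{rs}=\theta^{(m),k}_{rs}+t^k(p^k_r-p^{(m)}_r)/S^{(m)}$, with $t^k$ the scalar computed at line~\ref{line:tk} ($t^k\equiv1$ when $\gamma=\infty$). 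Therefore $2\pi^{(m),k+1}_{rs}-\theta^{(m),k}_{rs}=\theta^{(m),k}_{rs}+2t^k(p^k_r-p^{(m)}_r)/S^{(m)}$, so \cref{prop_3} with $y=2\pi^{k+1}-\theta^k$ yields $\hat\pi^{k+1}$ column-wise as the projection onto $\Delta_R(q^{(m)}_s)$ of the vector $w$ of line~\ref{line:w}, which is line~\ref{line:proj}; and $\theta^{k+1}=\theta^k+\hat\pi^{k+1}-\pi^{k+1}$ reads $\theta^{(m),k+1}_{rs}=\hat\pi^{(m)}_{rs}-t^k(p^k_r-p^{(m)}_r)/S^{(m)}$, which is line~\ref{line:k+1}. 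The recomputation of $p^{(m)}$ (Step 3) and of $p^k$ (Step 1) only prepares the next pass --- this is the storage-motivated relocation of part of the first DR step noted after the algorithm. Thus the $\theta$-sequences of \cref{alg} and~\cref{DR} coincide, and by \cref{prop_2} the returned $p^k$ is precisely the common left marginal $\sum_s(\mathtt{Proj}_\L(\theta^k))^{(m)}_{rs}$, independent of $m$.

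Next I would identify the limit. By \cref{theo:DR}, $\theta^k\to\bar\theta$, $\pi^k\to\bar\pi$, and $\bar\pi=\arg\min_\pi g(\pi)+\frac{\rho}{2}\|\pi-\bar\theta\|^2$ solves~\cref{pbm}. As $\mathtt{Proj}_\L$ is linear, $\mathtt{Proj}_\L(\theta^k)\to\mathtt{Proj}_\L(\bar\theta)$, so $p^k\to\bar p$, the common left marginal of $\mathtt{Proj}_\L(\bar\theta)$; moreover, writing $\bar\pi=(1-t)\bar\theta+t\,\mathtt{Proj}_\L(\bar\theta)$ (with $t=1$ in the balanced case) and using idempotence $\mathtt{Proj}_\L\circ\mathtt{Proj}_\L=\mathtt{Proj}_\L$ gives $\mathtt{Proj}_\L(\bar\pi)=\mathtt{Proj}_\L(\bar\theta)$, hence $\bar p$ is also the common marginal of $\mathtt{Proj}_\L(\bar\pi)$. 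In the balanced case $g=\ind_\L$, so $\bar\pi=\mathtt{Proj}_\L(\bar\theta)\in\L$ solves~\cref{HugeLP-bis}; as recalled in \cref{sec:background}, $\bar p_r=\sum_{s}\bar\pi^{(m)}_{rs}$ is then an optimal $p$ of~\cref{HugeLP}, i.e. a solution of~\cref{eq:WB}, so $\bar p$ is a balanced WB. In the unbalanced case $\bar\pi$ is the unique solution of~\cref{UWB}, so $\bar p$, being the common marginal of $\mathtt{Proj}_\L(\bar\pi)$, is exactly the $\gamma$-unbalanced WB of \cref{def:UWB}. This proves a).

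Finally, for b) the same line-by-line identification shows that \cref{alg} with $\mathcal{M}^k=A_i$ drawn with probability $\beta_i$ is the block-randomized Douglas-Rachford scheme~\cref{DR-random} applied to~\cref{pbm} with the coarser additive splitting $f=\sum_{i=1}^{\tt nb}\big(\sum_{m\in A_i}f^{(m)}\big)$, each block being picked with the fixed positive probability $\beta_i$; \cref{theo:DR-random} then gives $\pi^k\to\bar\pi$ almost surely with $\bar\pi$ solving~\cref{pbm}. Since $\pi^{k+1}$ is still the full prox step on $g$, the identity $\mathtt{Proj}_\L(\pi^{k+1})=\mathtt{Proj}_\L(\theta^k)$ (from $\pi^{k+1}=(1-t^k)\theta^k+t^k\mathtt{Proj}_\L(\theta^k)$ and idempotence) shows that $p^k$, the common left marginal of $\mathtt{Proj}_\L(\pi^{k+1})$, converges almost surely to the common marginal $\bar p$ of $\mathtt{Proj}_\L(\bar\pi)$, which by the previous paragraph is a.s. a balanced WB, resp. a.s. the $\gamma$-unbalanced WB. The main obstacle is the bookkeeping in the first step --- checking that the scalar $t^k$ of lines~\ref{line:tk},~\ref{line:w},~\ref{line:k+1} faithfully realizes the proximal mapping of $\gamma\,\mathtt{dist}_\L$ through~\cref{eq:Unbalanced-prox}, and that the ``split'' first DR step is recovered by the end-of-loop update of $p^{(m)}$ followed by the start-of-loop recomputation of $p^k$ --- together with, in the randomized unbalanced case, transferring the convergence from $\{\pi^k\}$ to $\{p^k\}$ without a convergence statement for $\{\theta^k\}$ at hand, which is what the projection identity above provides.
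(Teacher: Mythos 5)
Your proposal is correct and follows essentially the same route as the paper's proof: identify \cref{alg} line by line with the DR scheme~\cref{DR} (resp.\ its randomized variant~\cref{DR-random}) via \cref{prop_2,prop_3}, invoke \cref{theo:DR} (resp.\ \cref{theo:DR-random}), and transfer convergence from the plans to $\{p^k\}$ through the identity $\mathtt{Proj}_\L(\pi^{k+1})=\mathtt{Proj}_\L(\theta^k)$ and continuity of the projection, then read off the balanced/unbalanced conclusions from \cref{def:WB,def:UWB}. The only (immaterial) difference is that in part a) you pass through $\bar\theta$ and the linearity of $\mathtt{Proj}_\L$, whereas the paper argues through $\tilde\pi^k:=\mathtt{Proj}_\L(\pi^k)$ in both parts.
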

\begin{proof}
    It suffices to show that \cref{alg} is an implementation of the (randomized) DR algorithm and invoke \cref{theo:DR} for item a) and \cref{theo:DR-random} for item b).    
    To this end, we first rely on \cref{prop_2} %and comments above to define $\pi^{k+1}$ as in~\cref{pik+1}
    to get that the projection of $\theta^k$ onto the balanced subspace $\L$ is given by $\theta^{(m),k}_{rs} +\frac{(p^k_r-p^{(m)}_r)}{S^{(m)}}$, $s=1,\ldots,S^{(m)}$,\;$r=1,\ldots,R$,\; $m=1,\ldots,M$,
% \begin{equation*}%\label{eq:aux:theo}
% \theta^{(m),k}_{rs} +\frac{(p^k_r-p^{(m)}_r)}{S^{(m)}},\quad s=1,\ldots,S^{(m)},\;r=1,\ldots,R,\; m=1,\ldots,M,
% \end{equation*}
where $p^k$ is computed at Step 1 of the algorithm, and the marginals $p^{(m)}$ of $\theta^k$ are  computed at Step 0 if $k=0$ or at Step 3 otherwise. Therefore, $\mathtt{dist}_\L(\theta^k) = \norm{ \mathtt{Proj}_\L(\theta^k)-\theta^k}=\sqrt{\sum_{m=1}^M \frac{\norm{p^k- p^{(m)}}^2}{S^{(m)}}}$. Now, given the rule for updating $t^k$ in \cref{alg} we can define the auxiliary variable $\pi^{k+1}$ as $\pi^{k+1} = 
\theta^k +t^k(\mathtt{Proj}_{\L}(\theta^k)-\theta^k)$, or alternatively,
% \[
% \pi^{k+1} = 
% \theta^k +t^k(\mathtt{Proj}_{\L}(\theta^k)-\theta^k),
% \]
% or alternatively,
\begin{equation}\label{eq:aux:theo}
\pi^{(m),k+1}_{rs} = 
\theta^{(m),k}_{rs} +t^k\frac{(p^k_r-p^{(m)}_r)}{S^{(m)}},\quad s=1,\ldots,S^{(m)},\;r=1,\ldots,R,\; m=1,\ldots,M.
\end{equation}
In the balanced case, $t^k=1$ for all $k$  (because $\gamma=\infty$) and thus $\pi^{k+1}$ is as in \cref{row_sol}. Otherwise, $\pi^{k+1}$ is as in \cref{eq:Unbalanced-prox} (see the comments after \cref{alg}). In both cases, $\pi^{k+1}$ coincides
with the auxiliary variable at the first step of the DR scheme \cref{DR} (see the developments at the beginning of this section).
Next, observe that to perform the second step of \cref{DR} we need to assess $y=2\pi^{k+1}-\theta^k$, which is
%\[
%y=\theta^k+ 2t^k(\mathtt{Proj}_{\L}(\theta^k)-\theta^k)
%\]
thanks to the above formula for $\pi^{k+1}$ given by
%. Furthermore, as
%$\mathtt{Proj}_{\L}(\theta^k)$ is given by \cref{eq:aux:theo}, we can go further and write $y=2\pi^{k+1}-\theta^k$ as
$y^{(m)}_{rs} = \theta^{(m),k}_{rs} +2\,t^k\,\frac{p_r^k-p_r^{(m)}}{S^{(m)}}$, $s=1,\ldots,S^{(m)}$,\;$r=1,\ldots,R$,\; $m=1,\ldots,M$.

% \[
% y^{(m)}_{rs} = \theta^{(m),k}_{rs} +2\,t^k\,\frac{p_r^k-p_r^{(m)}}{S^{(m)}} \quad s=1,\ldots,S^{(m)},\;r=1,\ldots,R,\; m=1,\ldots,M.
% \]
As a result, for the choice $\mathcal{M}^k=\col{1,\ldots,M}$ for all $k$, Step 2 of \cref{alg} yields, thanks to \cref{prop_3}, $\hat \pi^{k+1}$ as at 
the second step of \cref{DR}. Furthermore, the updating of $\theta^{k+1}$ in the latter coincides with the rule in \cref{alg}: for $s=1,\ldots,S^{(m)}$, $r=1,\ldots,R$, and $m=1,\ldots,M$,
% \begin{equation}
% \left\{
\begin{align*}
    \theta^{(m),k+1}_{rs} &= \theta^{(m),k}_{rs}  +\hat \pi^{(m),k+1}_{rs}-\pi^{(m),k+1}_{rs} = \theta^{(m),k}_{rs}  +\hat \pi^{(m),k+1}_{rs}-\left(\theta^{(m),k}_{rs} +t^k\frac{(p^k_r-p^{(m)}_r)}{S^{(m)}}\right)\\
   &=\hat \pi^{(m),k+1}_{rs}-t^k\frac{(p^k_r-p^{(m)}_r)}{S^{(m)}}.
\end{align*}
% \right.
% \end{equation}
Hence, for the choice $\mathcal{M}^k=\col{1,\ldots,M}$ for all $k$, \cref{alg} is the DR Algorithm \cref{DR} applied to the WB \cref{pbm}. 
\Cref{theo:DR} thus ensures that the sequence $\{\pi^k\}$ as defined above converges to some $\bar \pi$ solving \cref{pbm}. To show that $\{p^k\}$ converges to a barycenter, let us first use the property that $\L$ is a linear subspace to obtain the decomposition $\theta=\mathtt{Proj}_\L(\theta)+ \mathtt{Proj}_{\L^\perp}(\theta)$ that allows us to rewrite the auxiliary variable $\pi^{k+1}$ differently:
% $\pi^{k+1} = 
% \theta^k +t^k(\mathtt{Proj}_{\L}(\theta^k)-\theta^k)
% =\theta^k -t^k\mathtt{Proj}_{\L^\perp}(\theta^k)$.
\[
\pi^{k+1} = 
\theta^k +t^k(\mathtt{Proj}_{\L}(\theta^k)-\theta^k)
=\theta^k -t^k\mathtt{Proj}_{\L^\perp}(\theta^k).
\]
Let us denote $\tilde \pi^{k+1}:=\mathtt{Proj}_\L(\pi^{k+1})$. 
%\[\tilde \pi^{k+1}:=\mathtt{Proj}_\L(\pi^{k+1}) \quad \mbox{and}\quad \tilde \pi := \mathtt{Proj}_\L(\bar \pi).
%\]
Then $\tilde \pi^{k+1}= %\mathtt{Proj}_\L(\pi^{k+1})=
\mathtt{Proj}_\L(\theta^k -t^k\mathtt{Proj}_{\L^\perp}(\theta^k))=\mathtt{Proj}_\L(\theta^k )$,
% \begin{align*}
% \tilde \pi^{k+1}= %\mathtt{Proj}_\L(\pi^{k+1})=
% \mathtt{Proj}_\L(\theta^k -t^k\mathtt{Proj}_{\L^\perp}(\theta^k))=\mathtt{Proj}_\L(\theta^k ),
% \end{align*}
and thus \cref{prop_2} yields
% $\tilde \pi^{(m),k+1}_{rs} = \theta^{(m),k}_{rs} +\frac{p_r^k-p_r^{(m)}}{S^{(m)}}$  $s=1,\ldots,S^{(m)}$,\;$r=1,\ldots,R$,\; $m=1,\ldots,M$,
\[
\tilde \pi^{(m),k+1}_{rs} = \theta^{(m),k}_{rs} +\frac{p_r^k-p_r^{(m)}}{S^{(m)}} \quad s=1,\ldots,S^{(m)},\;r=1,\ldots,R,\; m=1,\ldots,M,
\]
which in turn gives (by recalling that $\sum_{s=1}^{S^{(m)}}\theta^{(m),k}_{rs}=p^{(m)}_r$): 
$\sum_{s=1}^{S^{(m)}}\tilde \pi^{(m),k+1}_{rs}=p_r^k$,  $r=1,\ldots,R$,\; $m=1,\ldots,M$. As $\lim_{k\to \infty}\pi^k =\bar \pi$, %solving~\cref{pbm}
$\lim_{k\to \infty}\tilde \pi^k = \lim_{k\to \infty} \mathtt{Proj}_\L (\pi^k) = \mathtt{Proj}_\L (\bar \pi) =: \tilde \pi$.
Therefore, for all $r=1,\ldots,R$, $m=1,\ldots,M$, the following limits are well defined:
\begin{equation}
    \bar p_r:= \sum_{s=1}^{S^{(m)}}\tilde \pi^{(m)}_{rs}=
\lim_{k\to \infty }\sum_{s=1}^{S^{(m)}}\tilde \pi^{(m),k+1}_{rs}=\lim_{k\to \infty } p_r^k.
\end{equation}

We have shown that the whole sequence $\{p^k\}$ converges to $\bar p$. By recalling that $\bar \pi$ solves \cref{pbm}, we conclude that in the balanced setting $\tilde \pi = \bar \pi$ and thus $\bar p$ is a WB. On the other hand, in the unbalanced setting, $\bar p$ above is  a $\gamma$-unbalanced WB according to \cref{def:UWB}.

The proof of item b) is a verbatim copy of the above proof: the sole difference, given the assumptions on the choice of $\mathcal{M}^k$, is that we need to rely on \cref{theo:DR-random} (and not on \cref{theo:DR} as previously done) to conclude that $\{\pi^k\}$ converges almost surely to some $\bar \pi$ solving \cref{pbm}. Thanks to the continuity of the orthogonal projection onto the subspace $\L$, the limits above yield almost surely convergence of $\{p^k\}$ to a barycenter $\bar p$.
\end{proof}

\if{
\subsection{Algorithm's Extension: the Full Unbalanced Setting}\label{sec:ext}
In our definition of unbalanced Wasserstein barycenter, we have relaxed only the marginals with respect to the barycenter $p$. In constrast, the approaches in \cite{Heinemann_Klatt_Munk_2022} and \cite{Sejourne_Peyre_Vialard_2023} also relax
the marginals with respect to $q^{(m)}$, $m=1,\ldots,M$. As we now show, extending 
MAM to cover such setting is a simple exercise. 

Here, motivated by \cite{Heinemann_Klatt_Munk_2022}, we choose another parameter $\eta>0$, redefine the distance matrices,  
\[
d^{(m)}_{rs}:=\alpha_m[\mathtt{d}(\xi_r,\zeta^{(m)}_s)-\eta], \quad r=1,\ldots,R,\; s=1,\ldots,S^{(m)},\; m=1,\ldots,M,\]
and replace problem~\cref{UWB}
with 
\[
\left\{
\begin{array}{llllllllll}
\displaystyle \min_{\pi } & \displaystyle \sum_{r=1}^R \sum_{s=1}^{S^{(1)}} d^{(1)}_{rs}\pi^{(1)}_{rs}&+\cdots +& \displaystyle\sum_{r=1}^R
 \sum_{s=1}^{S^{(M)}}d^{(M)}_{rs}\pi^{(M)}_{rs} &+& \gamma\,\mathtt{dist}_\L(\pi)\\
 \mbox{ }\\
 \mbox{s.t.} & \sum_{r=1}^R \pi^{(1)}_{rs} &&&\hspace{-1.3cm}\leq q^{(1)}_s,&\;s=1,\ldots,S^{(1)} \\
 &&\ddots &&\hspace{-1.3cm}\vdots\\
 &&&\sum_{r=1}^R \pi^{(M)}_{rs} &\hspace{-1.3cm}\leq  q^{(M)}_s,&\;s=1,\ldots,S^{(M)}\\[1em]
  &\pi^{(1)}\geq 0&\cdots &\pi^{(M)}\geq 0.
\end{array}
\right.
\]
In comparison with~\cref{UWB}, this problem has inequality constraints instead of equalities and penalizes the mismatches $\sum_{s=1}^{S^{(m)}}(q^{(m)}_s - \sum_{r=1}^R \pi^{(m)}_{rs})\geq 0$ with the penalty $\alpha_m \eta>0$, $m=1,\ldots,M$.
As $\sum_{m=1}^M \alpha_m \eta \sum_{s=1}^{S^{(m)}}q^{(m)}_s$ is a constant, it does not need to appear in the objective function above.
By defining 
\begin{gather*}
\Pi^{(m)}_{\leq}:=\left\{\pi^{(m)}\geq 0:\,\sum_{r=1}^R \pi^{(m)}_{rs}\leq q^{(m)}_s,\; s=1,\ldots,S^{(m)}\right\},\; m=1,\ldots,M,
\\
f^{(m)}_{\leq} (\pi^{(m)}):= 
\displaystyle \sum_{r=1}^R \sum_{s=1}^{S^{(m)}} =\alpha_m[\mathtt{d}(\xi_r,\zeta^{(m)}_s)-\eta]\pi^{(m)}_{rs} +\ind_{\Pi^{(m)}_{\leq}}(\pi^{(m)}),\quad m=1,\ldots,M,\\
f(\pi):=\sum_{m=1}^M f^{(m)}_{\leq}(\pi^{(m)})\quad \mbox{and}\quad
g(x)=    \gamma\,{\tt dist}_\L(\pi) ,
\end{gather*}   
the above problem can be recast as in formulation~\cref{pbm} and thus our approach applies.
Evaluating the proximal mapping of function $g$ has already been discussed in \cref{sec:proj}  (see \cref{eq:Unbalanced-prox}). It remains to discuss how to evaluate the  proximal mapping of the new function $f$ above.
On this account, let us define  the following set, for $\tau\geq 0$ arbitrary:
\[
\Delta_{\leq}(\tau):=\col{u \in \Re^R_+:\, \sum_{i=1}^R u_i\leq \tau}.
\]
\begin{proposition} %\label{prop_4} 
Let the distance matrix $d$, function $f(\cdot)$ and set $\Delta_{\leq}(\cdot)$ be as above.
The proximal mapping 
\[\hat \pi:=\min_{\pi }\; f(\pi)+\frac{\rho}{2} \norm{\pi -y}^2 \]
can be computed exactly, in parallel along the columns of each transport plan $y^{(m)}$, as follows: %the projection of each column onto the simplex $\Delta_R$:
for  all $m \in \{1,\ldots,M\}$,
\[
    \begin{pmatrix}\hat \pi^{(m)}_{1s}\\ \vdots \\ \hat \pi^{(m)}_{Rs})\end{pmatrix}=\,{\tt Proj}_{\Delta_{\leq}(q_s^{(m)})}
    \begin{pmatrix}
     y_{1s} - \frac{1}{\rho}d^{(m)}_{1s}\\
     \vdots  \\ 
     y_{Rs} - \frac{1}{\rho}d^{(m)}_{Rs} \end{pmatrix},
    \quad s=1,\ldots, S^{(m)}.
\]
\end{proposition}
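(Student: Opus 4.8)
The plan is to mimic the proof of \cref{prop_3} almost line for line, the only difference being that each column subproblem is now an inequality-constrained quadratic program whose minimizer is a projection onto $\Delta_{\leq}(q_s^{(m)})$ instead of onto the scaled simplex $\Delta_R(q_s^{(m)})$. First, because $f=\sum_{m=1}^M f^{(m)}_{\leq}$ is additively separable over the blocks $\pi^{(m)}$, the evaluation of $\hat\pi$ splits into $M$ independent quadratic programs, the $m^{th}$ one being
\[
\min_{\pi^{(m)}\ge 0}\ \sum_{r=1}^R\sum_{s=1}^{S^{(m)}}\Big[d^{(m)}_{rs}\pi^{(m)}_{rs}+\tfrac{\rho}{2}\big(\pi^{(m)}_{rs}-y^{(m)}_{rs}\big)^2\Big]\quad\text{s.t.}\quad \textstyle\sum_{r=1}^R\pi^{(m)}_{rs}\le q^{(m)}_s,\ s=1,\dots,S^{(m)},
\]
which follows directly from the definitions of $f^{(m)}_{\leq}$ and $\Pi^{(m)}_{\leq}$.

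Next I would observe, exactly as in \cref{prop_3}, that this quadratic program has an objective separable over the columns indexed by $s$ while its constraints couple only the $R$ entries within a fixed column. Hence it decomposes further: for each fixed $s$, the $s^{th}$ column of $\hat\pi^{(m)}$ is the unique solution of the $R$-dimensional problem
\[
\min_{w\in\R^R}\ \sum_{r=1}^R\Big[d^{(m)}_{rs}w_r+\tfrac{\rho}{2}(w_r-y^{(m)}_{rs})^2\Big]\quad\text{s.t.}\quad w\ge 0,\ \textstyle\sum_{r=1}^R w_r\le q^{(m)}_s.
\]
Completing the square in each $w_r$ rewrites the objective as $\tfrac{\rho}{2}\sum_{r=1}^R\big(w_r-(y^{(m)}_{rs}-\tfrac1\rho d^{(m)}_{rs})\big)^2$ plus a term independent of $w$, so the problem is precisely the Euclidean projection of the vector with entries $y^{(m)}_{rs}-\tfrac1\rho d^{(m)}_{rs}$, $r=1,\dots,R$, onto the set $\Delta_{\leq}(q_s^{(m)})$. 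Since $\Delta_{\leq}(\tau)$ is nonempty, closed and convex for every $\tau\ge 0$, this projection exists and is unique, which yields the displayed formula.

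It remains to justify that this projection can be carried out \emph{exactly}. If $\tau=q_s^{(m)}=0$ then $\Delta_{\leq}(\tau)=\{0\}$ and the projection is zero. If $\tau>0$, write $\hat v:=\max\{v,0\}$ componentwise, with $v$ the vector being projected; if $\sum_r\hat v_r\le\tau$ then $\hat v$ already lies in $\Delta_{\leq}(\tau)$ and, being the projection of $v$ onto the larger set $\R^R_+$, is a fortiori the projection onto $\Delta_{\leq}(\tau)$. Otherwise a complementarity/KKT argument shows the constraint $\sum_r w_r\le\tau$ must be active at the optimum, so the projection lies on the face $\{w\ge0:\sum_r w_r=\tau\}=\Delta_R(\tau)$ and therefore coincides with $\mathtt{Proj}_{\Delta_R(\tau)}(v)=\tau\,\mathtt{Proj}_{\Delta_R}(v/\tau)$ (cf. \cref{rem:ProjSimplex}), which is computed exactly by the method of \cite{Condat_2016}. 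The argument is essentially routine; the only step requiring a word of care is this active/inactive dichotomy, which is where one invokes optimality conditions rather than the purely algebraic Lagrange-system manipulation used in \cref{prop_3}.
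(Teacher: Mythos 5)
Your proof is correct and follows essentially the same route as the paper, which simply repeats the argument of \cref{prop_3} with equalities replaced by inequalities (decompose over measures, then over columns, complete the square to recognize a projection onto $\Delta_{\leq}(q_s^{(m)})$). The closing active/inactive dichotomy you give for computing $\mathtt{Proj}_{\Delta_{\leq}(\tau)}$ exactly is also how the paper handles it, though it places that two-step procedure in the discussion immediately after the proposition rather than inside the proof.
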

\begin{proof}
    It suffices to proceed as in the proof of \cref{prop_3} by replacing equalities by inequalities.
\end{proof}
The projection onto $\Delta_{\leq }(\tau)$ is trivial if $\tau=0$. Let $\tau> 0$ and observe that projecting a vector $w \in \Re^R$ onto $\Delta_{\leq }(\tau)$ can be done in two steps:
\begin{itemize}
    \item First, define the non-negative vector $\check w$, with $ \check w_i:= \max\col{w_i,0}$, $i=1,\ldots,R$. 
    \item If $\sum_{i=1}^R \check w_i \leq \tau$, then $\mathtt{Proj}_{\Delta_{\leq}(\tau)}(w) = \check w$. Else, $\mathtt{Proj}_{\Delta_{\leq}(\tau)}(w) = \mathtt{Proj}_{\Delta_{R}(\tau)}(w)= \tau\,\mathtt{Proj}_{\Delta_{R}}(w/\tau)$,
    where the latter is the projection onto the $R+1$ simplex.
\end{itemize}
%We claim that this task is not much more difficult than in the case of Subsection~\ref{sec:prox}. Indeed, Proposition~\ref{prop_3}
As a result, \cref{alg} can also be applied to solve the above more general unbalanced Wasserstein barycenter problem. All that is needed is to choose another penalty parameter $\eta>0$, modify the distance matrix (as above),
and replace $\Delta_R(q^{(m)}_s)$ with $\Delta_{\leq}(q^{(m)}_s)$ at line~\ref{line:proj} of \cref{alg}.

\begin{remark}
    Considering a forth case where $f(\cdot)$ is as above but $g(\cdot)=\ind_\L(\cdot)$ may give a poor definition of unbalanced WB. The reason is that the total mass of a solution $\bar \pi$ of \cref{pbm} is less than or equal to the minimal total masses of $q^{(m)}$, $m=1,\ldots,M$. Indeed, observe that in this case the multi-plans $\bar \pi^{(m)}$ share the same marginal (barycenter) $\bar p$, i.e., $\bar p_r = \sum_{s=1}^{S^{(m)}}\bar \pi^{(m)}_{rs}$, $r=1,\ldots,R$. Therefore,
    \[
    \sum_{r=1}^R  \bar p_r = \sum_{r=1}^R  \sum_{s=1}^{S^{(m)}}\bar \pi^{(m)}_{rs}
    = \sum_{s=1}^{S^{(m)}} \sum_{r=1}^R \bar \pi^{(m)}_{rs}\leq \sum_{s=1}^{S^{(m)}} q^{S^{(m)}}_s.
    \]
\end{remark}

}\fi

\if{
\paragraph{Constrained barycenter problem}
\paragraph{Relaxing additional marginal constraints}
\paragraph{Optimization of the support}
\label{sec:optim_support}

A more general formulation of MAM can be naturally derived following the theory of \cref{eq:free-WB} that updates both probabilities (say \textit{weights} in the unbalanced case) and support of the barycenter. This method is treated here as a new prospect that remains to be study more in depth since the present paper mainly focuses on the classic MAM (fixed support formulation) performance and properties.

\begin{algorithm}[htb]
\caption{\sc Wasserstein Barycenter }
  \label{alg2}
\begin{algorithmic}[1]

%\Statex\Comment{Initialization}
%\State Choose symmetric positive definite matrices $C^{m}$, $m=1,\ldots,M$, set $k:=0$  and $\xi^0 := \xi $
\State Set $k:=0$  and $\xi^0 := \xi $
\For{$k=0,1,2\ldots,$}

\State Compute $d^{(m)}_{rs}= \frac{1}{2}\|\xi_r^k-\zeta^{(m)}_s\|^2$, $m=1,\ldots,M$
\State Update the probabilities : apply \cref{alg} to the fixed support problem
\[
p^k \in\arg\min_{p \in \Delta_R}\;\sum_{m=1}^M \alpha_m{\tt OT}(p,q^{(m)};{\xi^k,\zeta^{(m)}})
\]

\State Update the support 
\begin{equation}\label{xi-pbm}
\xi^{k+1} =\arg\min_{\xi }\;\sum_{m=1}^M \alpha_m{\tt OT}(p^k,q^{(m)};{\xi,\zeta^{(m)}})
\end{equation}

%\State Stop if $$
\EndFor
  \end{algorithmic}
\end{algorithm}

\noindent Where ~\cref{xi-pbm} is explicitly given in \cref{eq:free-WB}.

%\newpage

}\fi
\section{Numerical Experiments}\label{sec:num}

%Wasserstein barycenter and general optimal transportation problems have several applications in image processing and computer graphics, for which we refer for instance to \cite{IBP, Cuturi} [REFERENCE A AJOUTER en + ?]. Also, barycenter problems often echo to $k$-means computations. Indeed the most onerous step in this algorithm is the computation of the optimal centroid for each cluster at each iterations \cite{Puccetti}, some authors use non-exact methods to compute clustering for large clusters: \cite{J.Ye} developped a Bregman alternating direction method to compute the approximate discrete Wasserstein barycenter of large clusters. \\

This section illustrates the MAM's practical performance on some well-known datasets. 
The impact of different data structures is studied before the algorithm is compared to state-of-the-art methods. This section closes with an illustrative example of MAM to compute UWBs.  Numerical experiments were conducted using 20 cores (\textit{Intel(R) Xeon(R) Gold 5120 CPU}) and \textit{Python 3.9}.
% , or on a PC Intel(R) with 32GB of RAM under Windows 10, 4 cores, using  \textit{MATLAB 2020a} for the one application where \textit{Gurobi} \cite{gurobi} is needed. 
The test problems and solvers’ codes are available for download in the link \url{https://ifpen-gitlab.appcollaboratif.fr/detocs/mam_wb}.

\subsection{Study on data structure influence} \label{parametric_section}
We start by evaluating the impact of conditions that influence the storage complexity and the algorithm performance. The main conditions are the \textit{sparsity} of the data and the \textit{number of distributions} $M$. 
Naturally, the denser the distributions or the more distributions are treated, the greater the storage. In these configurations, the time per iteration grows because the number of projects onto the simplex increases. To assess the impact of data sparsity and the number of measures on the algorithm's performance, we consider a fixed-support approach and experiment on datasets inspired by  \cite{IBP,Cuturi_Doucet_14}. The number of nested ellipses controls the density of a dataset: as exemplified in \cref{nested_ellipses}(a) and \cref{table1}, measures with only a single ellipse are very sparse. In contrast, a dataset with 5 nested ellipses is denser. 
%We start by evaluating the impact of conditions that influence the storage complexity and the algorithm performance. The main conditions are the \textit{sparsity} of the data and the \textit{number of distributions} $M$. Indeed, on the one hand, the denser are the distributions, the greater RAM is needed to store the data per transport plan (see the management of \textit{storage complexity} in \cref{sec:MAM}). On the other hand, the more distributions are treated, the more transport plans are stored. In both of these configurations, the time per iteration is meant to grow, either because a processor would need to project more columns onto the respected simplex within \textit{Step 2}, or because \textit{Step 2} is repeated as many time as the number of distribution $M$ (see \cref{alg}). The dataset at hand, inspired from \cite{IBP,Cuturi_Doucet_14}, has been naturally built to control the sparsity (or respectively, density) of the distributions (see \cref{nested_ellipses} and \cref{table1}).
%Note that each image is normalized making it a representation of a probability distribution.
%The density of a dataset is controlled by the number of nested ellipses: as exemplified in \cref{nested_ellipses} and \cref{table1}, measures with only a single ellipse are very sparse, while a dataset with 5 nested ellipses is denser. \\
%
\begin{figure}[htb] \label{nested_ellipses}
    \centering  \subfigure[A sample of datasets]
    {\includegraphics[width=0.45\textwidth]{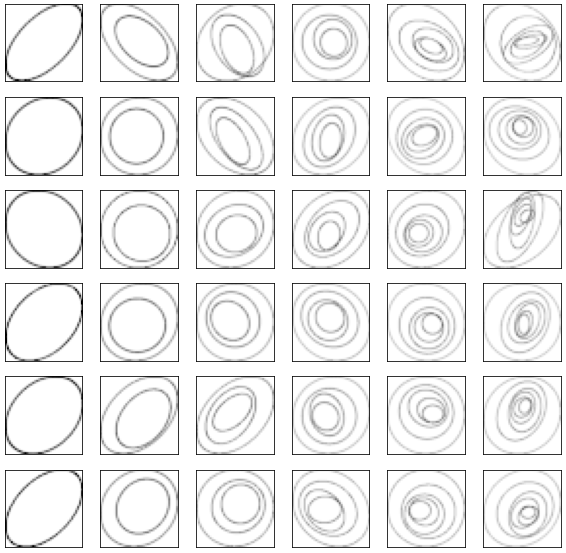}}
 \centering   \subfigure[MAM's number of iterations per second]{
     \includegraphics[width=0.5\textwidth]{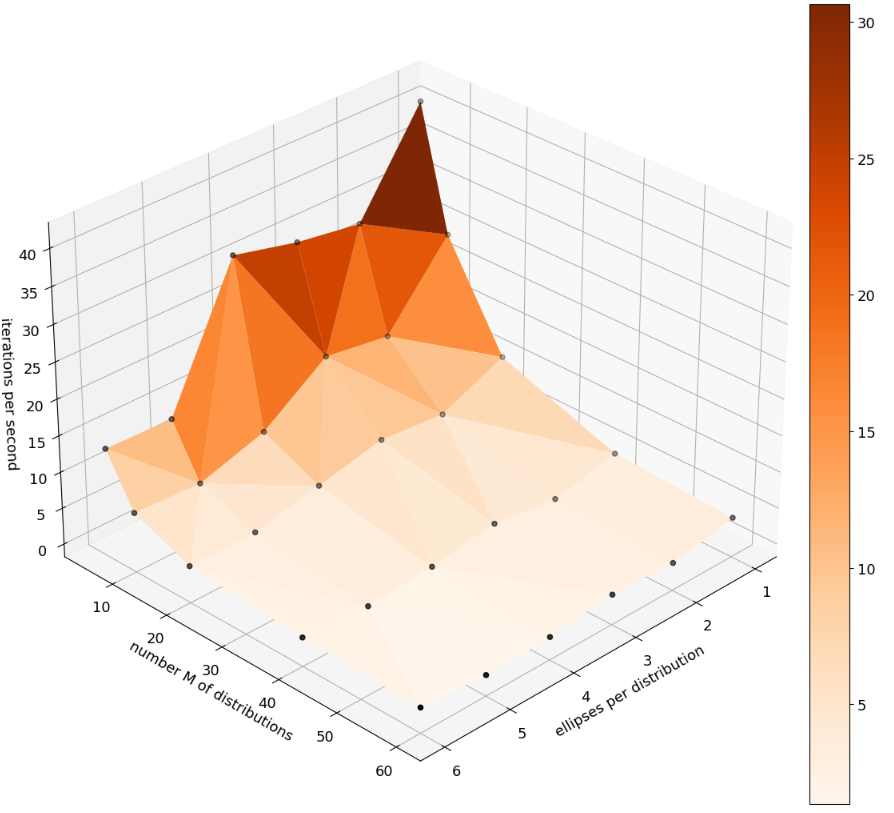}}
    \caption{(a) Sample of the artificial nested ellipses datasets. The first column is taken from the first dataset with 1 ellipse, the second column from the second dataset with 2 nested ellipses, and the sixth column with 6 nested ellipses.
    (b) Evolution of the number of iterations per second depending on the density or the number of distributions.}
\end{figure}
\begin{table}[h] \label{table1}
  \centering
  \caption{Mean density with the number of nested ellipses. The density has been calculated by averaging the ratio of non-null pixels per image over 100 generated pictures for each dataset sharing the same number of nested ellipses.}
  \renewcommand{\arraystretch}{1.5} % Modify the row height
  \begin{tabular}{|c|c|c|c|c|c|c|}
    \hline
    \rev{\textnormal{Number of ellipses}} & 1 & 2 & 3 & 4 & 5 & 6 \\
    \hline
    \textnormal{Density} ($\%$) & 29.0 & 51.4 & 64.3 & 70.9 & 73.5 & 75.0 \\
    \hline
  \end{tabular}
\end{table}
\rev{In this first experiment, we apply MAM with $\rho=100$ (without proper tuning) for every dataset. Only a single processor was considered to avoid CPU communication management. Figure~\ref{nested_ellipses}(b)} shows that, as expected, the execution time of an iteration increases with increasing density and number of measures.
%\begin{figure} \label{surface}
%    \centering
%    \includegraphics[width=0.4\textwidth]{pictures/surface_influence_parameters.png}
%    \caption{Evolution of the number of iterations per minute depending on the density or the number of distributions.}
%\end{figure}
%
%\cref{surface} shows that, as expected, the execution time of an iteration increases with increasing density and number of measures.
% But decay rate is stronger with the dependency of the number of ellipses per distribution, it induces 
The number of measures influences the method's speed more than density (this phenomenon can be due to the $numpy$ matrix management). 
%This can be explained by the fact that \textit{Step 2} is computed using $numpy$ that parallelize the vectorized projections of the columns of each marginal matrices before iterating to the next distribution matrice (in the \textit{for loop}). Therefore this result can be slightly different if another way of coding is taken into account, but such a way would be less efficient : $Matlab$ or $C$ can also take advantage of this intern parallelizable algebric step with vectorized matrices. [REFERENCE CODE PYTHON GITHUB ?] \\
% We underline that the number of distribution treated has an expected impact on the computation performances while the density of the support has little impact. 
This means the quantity of information in each measure does not seem to make the algorithm less efficient in terms of speed. Such a result is to be put in regard with algorithms such as B-ADMM \cite{J.Ye} that are particularly shaped for sparse datasets but less efficient for denser ones. \Cref{infl_support}  develops this further.
 \rev{Additionally, it is worth noting that the proposed method can harness parallel computation, enabling the distribution of work across the $M$ measures. This approach effectively mitigates the impact of the measure count on computational efficiency.}

\rev{
The growing dimensions of images have an impact on the computation time, as seen in \Cref{sec:MAM}. For example, when treating dense $K \times K$ images for a fixed support problem, the number of operations per probability density for the projection onto $\L$ is $O_1^\L =3\cdot K^2\cdot K^2=3\cdot K^4$ and onto the simplex $O_1^\Delta=K^2\cdot K^2\cdot\log(K^2)=2 K^4\cdot\log(K)$.
\begin{itemize}
    \item For a fixed-support problem with dense $(nK)\times(nK)$ images, $O_{nK}^\L =3\cdot (nK)^4=n^4\cdot O_1^\L$ and $O_{nK}^\Delta = n^4K^4\log(n^2K^2)\approx n^4 \cdot O_1^\Delta$.
    \item  For a fixed-support problem with dense $K\times\dots\times K=K^d$ measures, $O_{K^d}^\L =3\cdot (K^d)^2=K^{2d-4} \cdot O_1^\L $ and $O_{K^d}^\Delta=(K^d)^2\log(K^{d})= \frac{d}{2}K^{2d-4} \cdot O_1^\Delta$.
    \item For a free-support problem, in dimension $d$, with dense $K^d$ grids, the size of the support $R$ depends on the number $M$ of treated measures, $R=((K-1)M+1)^d$. Following the details of \Cref{sec:MAM}, $O_{free,K^d}^\L=3((K-1)M+1)^d K^d\approx M^d K^{2d-4} O_1^\L $ and $O_{free,K^d}^\Delta=((K-1)M+1)^d K^d\log(((K-1)M+1)^d)\approx \frac{d}{2} M^{d}K^{2d-4} \cdot O_1^\Delta$.
\end{itemize}
For instance, for a fixed-support problem with $40\times40$ images (see \cref{nested_ellipses}), the algorithm computes the projections for one measure in an average time of 0.01 seconds. However, for the free-support problem formulation with this dataset of 6 images, it takes 6 seconds per measure. Similarly, for a fixed-support problem with $40\times40\times40$ objects (ellipsoids with similar properties as in \cref{nested_ellipses} in 3D), the projections for one measure take 16 seconds.
}

\rev{
\subsection{Fixed-support approach} This section focuses on the fixed-support approach: $R$ in~\eqref{HugeLP} is equal to $K^2$, the number of pixels of a $K \times K$ image.
}
\subsubsection{Comparison with IBP}
The Iterative Bregman Projection (IBP) \cite{IBP} is a well-known algorithm for computing Wasserstein barycenters. As mentioned in the Introduction, IBP employs a regularizing function parameterized by $\lambda>0$, which impacts precision and must kept at a moderate magnitude to avoid numerical errors (double-precision overflow).
%The greater the $\lambda$, the worst the approximation. But in practice, $\lambda$ has to be kept in a moderate magnitude to avoid numerical errors (double-precision overflow). IBP is very sensitive to $\lambda$, that strongly relies on the dataset at stake. Thus IBP is an inexact method, whereas MAM is exact. 
%Although the study below shows certain advantages of MAM over IBP, we make it clear that the aim is not to demonstrate which algorithm is better in general but instead to highlight the differences between the two methods and their advantages depending on the use. Note that the code for IBP is inspired from the original \cite{IBP_github}.
The experiment below sheds light on the differences between MAM and IBP and their advantages depending on the use. Our IBP code is inspired by the original {\tt MATLAB} code by G. Peyr\'e\footnote{\url{https://github.com/gpeyre/2014-SISC-BregmanOT}}.

\subsubsection{Qualitative comparison} \label{qualitative_sec}
Here, we use 100 images per digit of the MNIST database \cite{MNIST_database}, where each digit has been randomly translated and rotated. Each image has 40 $\times$ 40 pixels and can be treated as probability distributions after normalization.
%, where the pixel location is the \textit{support} and the pixel intensity the \textit{probability}. In 
\Cref{barycenter_evolution} displays intermediate solutions for digits $3, 4, 5$ at different time steps both for MAM and IBP. For the two methods, the hyperparameters have been tuned: 
for instance, $\lambda = 1700$ is the greatest lambda that enables IBP to compute the barycenter of the 3's dataset without double-precision overflow error. Regarding MAM, a range of values for $\rho>0$ have been tested for 100 seconds of execution, to identify which one provides good performance (for example, $\rho=50$ for the dataset  of $3$'s).
\begin{figure}[htb] \label{barycenter_evolution}
    \centering
    \includegraphics[width=.9\textwidth]{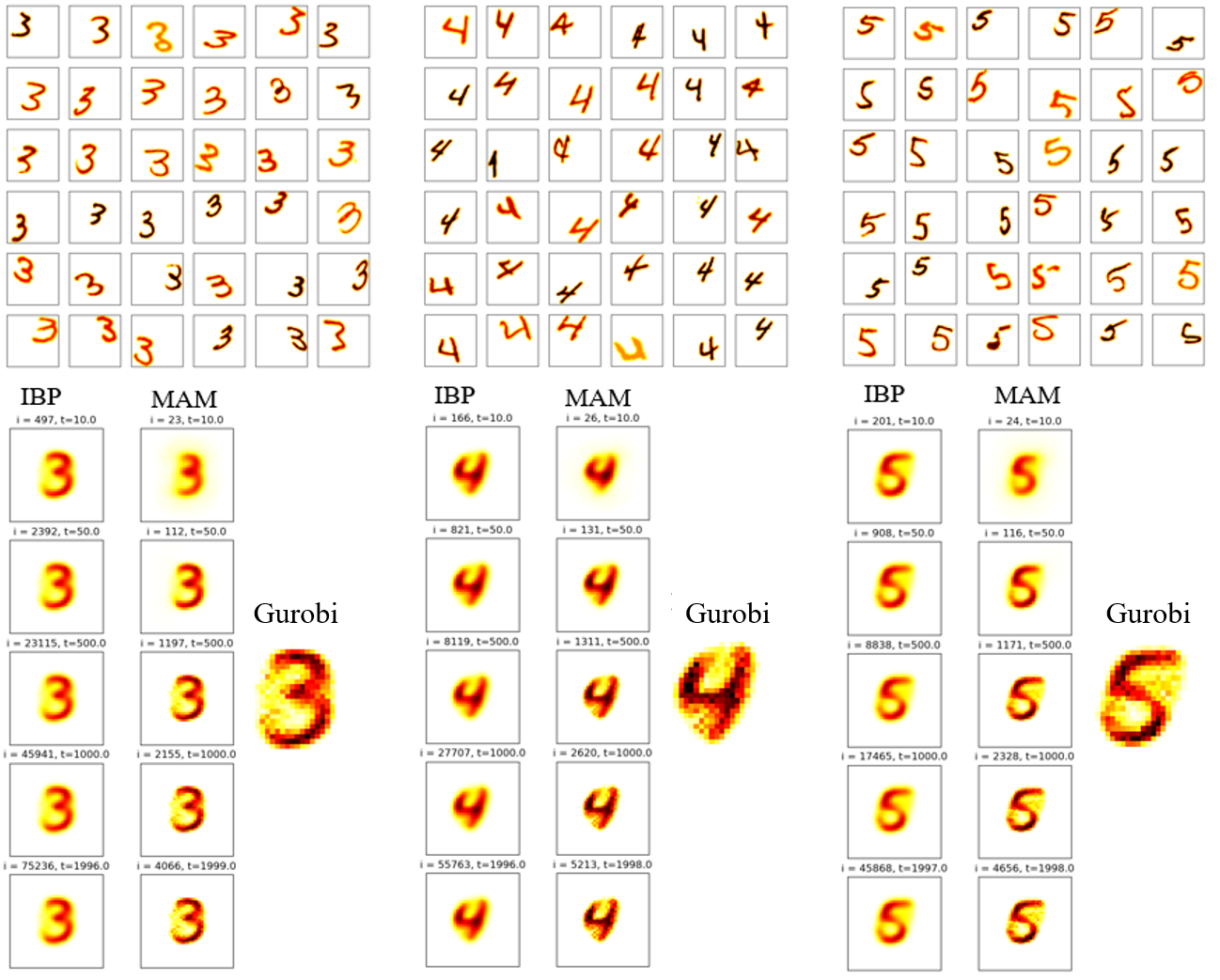}
    \caption{(top) For each digit 36 out of the 100 scaled, translated, and rotated images considered for each barycenter. (bottom) Barycenters after $t = 10, 50, 500, 1000, 2000$ seconds, where the left-hand-side is the IBP evolution of its barycenter approximation, the middle panel is MAM's evolution using 10 CPU
    and the right-hand-side is a solution computed by applying \textit{Gurobi} to the LP \cref{HugeLP}.}
\end{figure}
\Cref{barycenter_evolution} shows that, for each dataset, IBP gets quickly to a stable barycenter approximation. Such a point is obtained shortly after with MAM (less than 10 seconds after). However, MAM continues to move towards a sharper solution.
%(closer to the exact solution as exemplified quantitatively in \cref{quantitative_section}). 
It is clear that the more CPUs used for MAM, the better. Furthermore, while IBP is not well-suitable for CPU parallelization  \cite{IBP_github, IBP, J.Ye}, MAM offers a clear advantage depending on the hardware at stake.

\subsubsection{Quantitative comparison} \label{quantitative_section}
Next, we benchmark MAM, randomized MAM and IBP on a dataset with 60 images per digit of the MNIST database \cite{MNIST_database}, where every digit is a normalized image 40 $\times$ 40 pixels. 
% This corresponds to a limit to compute the exact solution with a Linear Program (LP) with 150Gb RAM [INFORMATION CORRECT ON THE DIGITAL SANDBOX?? cf lscpu], using \textit{scipy.optimize.linprog} on \textit{python}.
First, all three methods have their hyperparameters tuned thanks to a sensitivity study as explained in \Cref{qualitative_sec}. Then, at every time step an approximation of the computed barycenter is stored to compute the error $\bar W_2^2(p^{k}) -\bar W_2^2(p_{\rev{G}}) := \sum_{m=1}^M \frac{1}{M} W_2^2(\mu^k,\nu^{(m)}) - \sum_{m=1}^M \frac{1}{M}W_2^2(\mu_{\rev{G}},\nu^{(m)}))$, \rev{where $\mu_{G}$ is a fixed-support barycenter computed using \textit{Gurobi} to solve the LP \cref{HugeLP}}. 
%All methods were implemented in $python$ using a \textit{MPI} based parallelization. Note that IBP is inspired from the code of G. Peyré \cite{IBP_github}, MAM from \cref{alg} and MAM-randomized (\cref{rem:random}) has only one distribution treated by processor. 
%\cref{IBP_vs_MAM} displays the evolution w.r.t time, of the error measure $\bar W_2^2(p^{k}) -\bar W_2^2(p_{exact})$, with $p_{exact}$ an exact barycenter obtained by solving LP \cref{HugeLP} directly.
% graphs of the results
\begin{figure}[h!] \label{IBP_vs_MAM}
    \centering
    \includegraphics[width=1\textwidth]{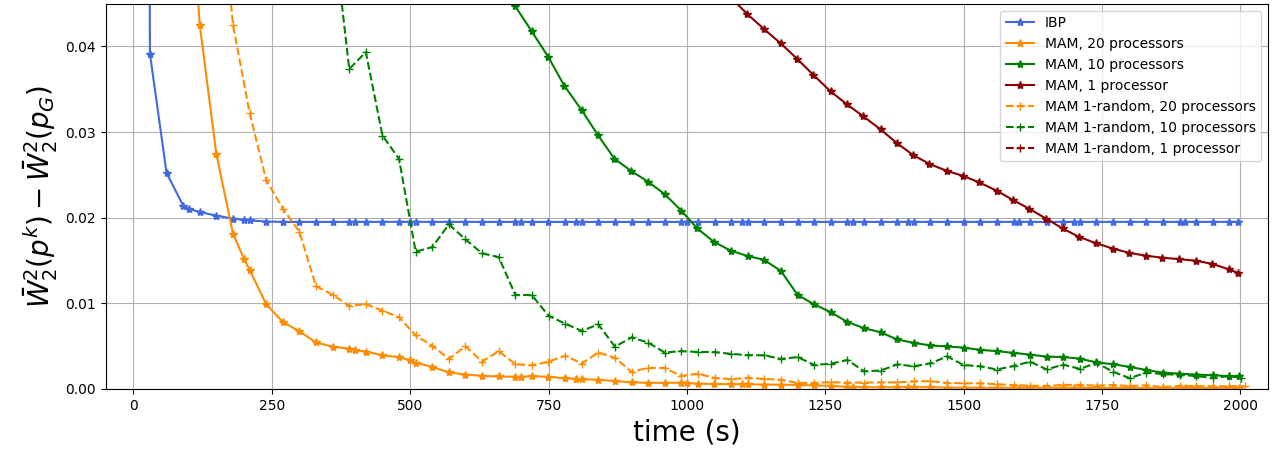}
    \caption{Evolution with respect to time of the difference between the Wasserstein barycenter distance of an approximation, $\bar W_2^2(p^{k})$, and the Wasserstein barycentric distance of the exact solution $\bar W_2^2(p_{G})$ given by the LP. The time step between two points is 30 seconds.}
\end{figure}

For this dataset, \cref{IBP_vs_MAM} shows that IBP is almost 10 times faster per iteration.
%, but being faster in one iteration does not mean faster speed overall.
However, IBP computes a solution to the regularized model, not to the \rev{(fixed-support) WB linear problem~\eqref{HugeLP}. Instead, MAM does converge to a solution of~\eqref{HugeLP}.} So there is a threshold where the accuracy of MAM exceeds that of IBP: in our case, around 200s - for the computation with the greatest number of processors (see \cref{IBP_vs_MAM}).
% As expected, the more processors, the quicker the algorithm converges to a solution. Note that for every number of processors, the method converge to the same solution. Therefore the threshold where MAM becomes more accurate than IBP 
Such a threshold always exists depending on the computational means (hardware). 
This quantitative study explains what has been exemplified with the images of \Cref{qualitative_sec}: the accuracy of IBP is bounded by the choice of $\lambda$, itself bounded by an overflow error. In contrast,  the MAM hyperparameter only impacts the convergence speed. For this dataset, the WB computed by IBP is within 2$\%$ of accuracy and thus reasonably good. However, as shown in Table 1 in \cite{J.Ye}, one can choose other datasets where IBP's accuracy might be unsatisfactory.
% : \wlo{Table I in \cite{J.Ye} shows errors of approximately 20$\%$, while for such a dataset, MAM has the same behavior than exemplified above and quickly reaches an accurate solution (\cref{BADMM_vs_MAM} elaborates more precisely about MAM performances on this dataset).} \\

Furthermore, \cref{IBP_vs_MAM} exemplifies an attractive asset of randomized variants of MAM: in some configurations, randomized MAM is more efficient than (deterministic) MAM.
(The curve \textit{MAM 1-random, 1 processor} does not appear in the figure because it is above the y-axis value range due to its bad performance.)
Indeed, a trade-off exists between time spent per iteration and precision gained after an iteration. For example, with 10 processors, each processor treats six measures in the deterministic MAM, but only one is treated in the randomized MAM. Therefore, the time spent per iteration is roughly six times shorter in the latter, which counterbalances the loss of accuracy per iteration. On the other hand, when using 20 processors, only three measures are treated by each processor, and the trade-off is not worth it anymore: the gain in time does not compensate for the loss in accuracy per iteration. One should adapt the use of the algorithm with care since this trade-off conclusion is only heuristic and strongly depends on the underlying dataset and hardware. A sensitivity analysis is always a good thought for choosing the most effective amount of measures handled per processor while using the randomized MAM against the deterministic MAM

\subsubsection{Influence of the support} \label{infl_support}
This section echoes \Cref{parametric_section} and studies the influence of the support size. To do so, two datasets have been tested for MAM and IBP. The first dataset is already used in \Cref{quantitative_section}: 60 pictures of 3's taken from the  MNIST database \cite{MNIST_database}. The second dataset is also composed of these 60 images but each digit has been randomly translated and rotated in the same way as in \cref{barycenter_evolution}. Therefore, the union of the support of the second dataset is greater than the first one. %, as illustrated in \cref{fig_support}.

% \begin{figure}[h] \label{fig_support}
%     \centering
%     \includegraphics[width=0.5\textwidth]{}
%     \caption{Images with $40 \times 40$ pixel grid, where the red represents the pixels which are in the union of the dataset support composed with 60 images. (left) for the standard MNIST, (right) for the randomly translated and rotated MNIST.}
% \end{figure}

\cref{fig_compare_support} presents two graphs that have been obtained just as in \Cref{quantitative_section}, but displaying the evolution in percentage: $\Delta W_{\%} := \frac{\bar W_2^2(p^{k}) -\bar W_2^2(p_{\rev{G}}) }{\bar W_2^2(p_{\rev{G}})} \times 100$. Once more, the hyperparameters have been fully tuned. The hyperparameter of the IBP method is smaller for the second dataset. \rev{Indeed, as stated in \cite{J.Ye}, the greater the support, the stronger the restrictions on $\lambda$, and thus, the less precise IBP.}
%
%further is the approximated problem to the exact one this is expected to witness rising differences between on the following graphs. 
%
% results
\begin{figure}[h] \label{fig_compare_support}
    \centering
    \includegraphics[width=.9\textwidth]{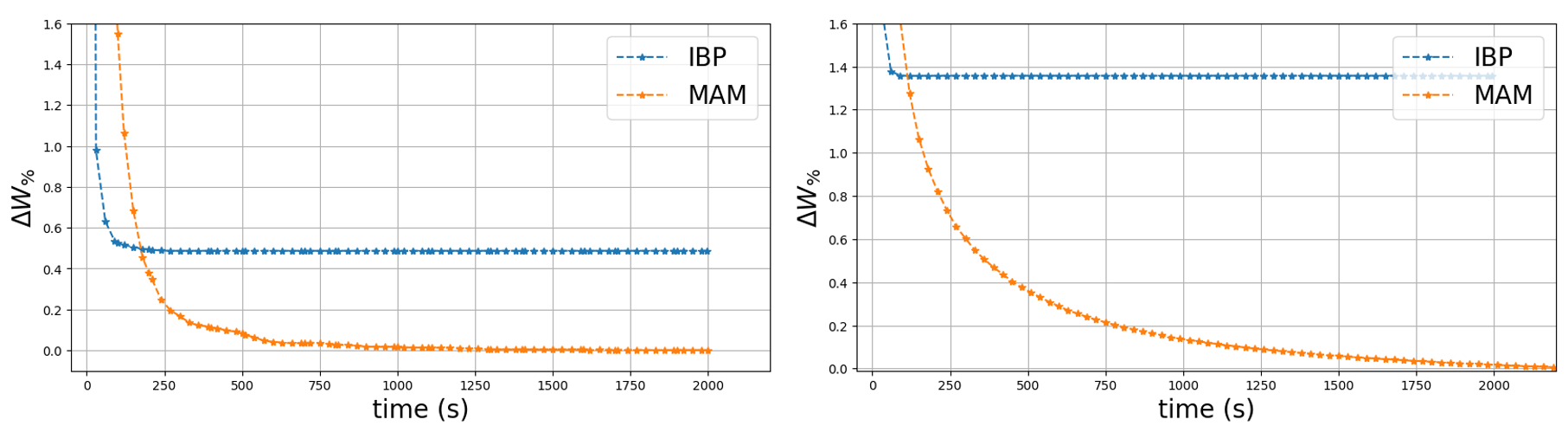}
    \caption{Evolution of the percentage of the distance between the exact solution of the barycenter problem and the computed solution using IBP and MAM method with 20 processors: (left) for the standard MNIST, (right) for the randomly translated and rotated MNIST. %\wlo{Ce graphique a des ordonnées en pourcentages pour comparer : c'est logique. Le graph d'au dessus à des ordonnées en valeurs absolu pour 'cacher' les faibles pourcentages, qu'en pensez vous ? Je devrais peut être tout mettre en pourcentage non?} 
    }
\end{figure}

%\rev{As MAM asymptotically solves~\eqref{HugeLP}, the method} is insensitive to the support size. The density of the dataset has little impact on the convergence time as explained in \cref{parametric_section} and exemplified in \cref{fig_compare_support}. 
%IBP does not converge to the exact solution and because of the hyperparameter limitation, the larger is the support and the farther is the converged approximation. This is critical in the second case scenario where the approximation is almost four time farther than with the first dataset, while the support at stake is only three times larger. 
%Such visual results concerning IBP initialization and parametrization have already been discussed in \cref{barycenter_evolution}, some other qualitative results can be found in \cite{Puccetti} where the author shows that properties of the distributions can be lost due to the entropy penalization in IBP.

\subsubsection{Comparison with B-ADMM} \label{BADMM_vs_MAM}
This subsection compares MAM with the algorithm B-ADMM of \cite{B-ADMM} using the dataset and {\tt MATLAB} implementation provided by the authors at the link \url{https://github.com/bobye/d2_kmeans}. We omit  IBP in our analysis because it has already been shown in  \cite[Table I]{B-ADMM} that IBP is outperformed by B-ADMM in this dataset. As in \cite[Section IV]{B-ADMM}, we consider $M=1000$ discrete measures, each with a sparse finite support set obtained by clustering pixel colors of images.
The average number of support points is around $6$, and the barycenter's number of \rev{fixed-}support points is $R=60$.
%An exact WB can be computed by applying an LP solver to the extensive formulation \cref{HugeLP}. 
\rev{The optimal value of~\eqref{HugeLP}} is $712.7$, computed in $10.6$ seconds by the Gurobi LP solver. We have coded MAM in {\tt MATLAB} to have a fair comparison with the {\tt MATLAB} B-ADMM algorithm provided at the above link. Since MAM and  B-ADMM use different stopping tests, we have set their stopping tolerances equal to zero and let the solvers stop with a maximum number of iterations. \Cref{tab:MAMxBADMM} below reports CPU time in seconds and the objective values yielded by the (approximated) barycenter $\tilde p$ computed by both solvers: $\bar W_2^2(\tilde p)$.
\begin{table}[htb]
\centering
  \caption{MAM vs B-ADMM. 
  B-ADMM code is the one provided by its designers without changing parameters (except the stopping set to zero and the maximum number of iterations). Both algorithms use the same initial point. 
  %The dataset is the one considered in \cite[Section IV]{B-ADMM}. 
  The optimal value of the WB barycenter for this dataset is $712.7$, computed by Gurobi in $10.6$ seconds. }
  \label{tab:MAMxBADMM}
\begin{tabular}{|c|cc|cc|}
\hline
\multirow{2}{*}{Iterations} & \multicolumn{2}{c|}{Objective value} & \multicolumn{2}{c|}{Seconds}       \\ \cline{2-5} 
                            & \multicolumn{1}{c|}{B-ADMM}  & MAM   & \multicolumn{1}{c|}{B-ADMM} & MAM  \\ \hline
100                         & \multicolumn{1}{c|}{742.8}   & 716.7 & \multicolumn{1}{c|}{1.1}    & 1.1  \\ \hline
200                         & \multicolumn{1}{c|}{725.9}   & 714.1 & \multicolumn{1}{c|}{2.4}    & 2.2  \\ \hline
500                         & \multicolumn{1}{c|}{716.5}   & 713.3 & \multicolumn{1}{c|}{5.6}    & 5.4  \\ \hline
1000                        & \multicolumn{1}{c|}{714.1}   & 712.9 & \multicolumn{1}{c|}{11.8}   & 10.8 \\ \hline
1500                        & \multicolumn{1}{c|}{713.5}   & 712.8 & \multicolumn{1}{c|}{18.9}   & 16.2 \\ \hline
2000                        & \multicolumn{1}{c|}{713.3}   & 712.8 & \multicolumn{1}{c|}{25.1}   & 21.6 \\ \hline
2500                        & \multicolumn{1}{c|}{713.2}   & 712.8 & \multicolumn{1}{c|}{31.0}   & 27.1 \\ \hline
3000                        & \multicolumn{1}{c|}{713.1}   & 712.7 & \multicolumn{1}{c|}{39.8}   & 32.4 \\ \hline
\end{tabular}
\end{table}

The results show that, for the considered dataset, MAM and B-ADMM are comparable regarding CPU time, with MAM providing more precise results. B-ADMM currently lacks a convergence analysis, unlike MAM.

\rev{
\subsection{Free-support approach} \label{free_support_a}
This section considers the \emph{free-support} problem (see \Cref{sec:background}, \cref{Th_Borg}), where the measures are supported on the same discrete grid in $\Re^2$ ($d=2$) and $\alpha_m=\frac{1}{M}$ for all $m=1,\dots,M$. The dataset we use is the one from~\cite{Altschuler_Boix-Adsera_2020}, 
illustrated in \cref{fig_dataset_altsch}. In this case, $M=10$ 
measures, $S=K^2=60^2$ and $R= ((K-1)M+1)^d = 591^2=349281$. 
The resulting LP problem is too large to be solved by standard solvers. Therefore, we employed the dedicated solver of \cite{Altschuler_Boix-Adsera_2020}, available at the link \url{https://github.com/eboix/high_precision_barycenters}.
%
%A standard LP solvers requires optimizing over these $R$ variables for the LP formulation and thus is infeasible at this scale.
 %
\begin{figure}[h] \label{fig_dataset_altsch}
    \centering
    \includegraphics[width=.7\textwidth]{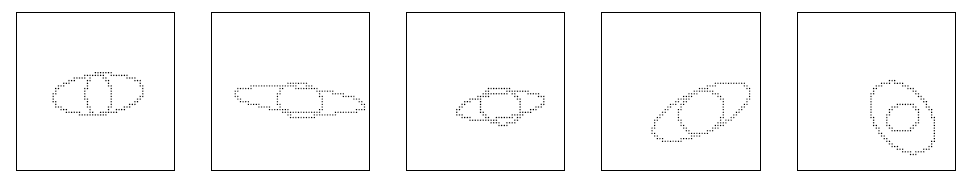}
    \caption{Five $60\times60$ images from the nested ellipses dataset in \cite{Altschuler_Boix-Adsera_2020}. } 
\end{figure}

\Cref{fig:Altschuler_vs_MAM} presents the evolution of the points computed by MAM along 9 hours of processing. The image on the right-hand side is an exact barycenter computed by the solver of \cite{Altschuler_Boix-Adsera_2020} after 3.5 hours.
%\Cref{fig:Altschuler_vs_MAM} illustrates the barycenter computed by the dedicated WB algorithm presented in \cite{Altschuler_Boix-Adsera_2020} in 3.5 hours processing. and using the MAM free support approach algorithm.
%Altschuler and Boix-Adsera utilize computational geometry and combinatorial optimization methodologies to introduce a tailored linear programming solver aimed at computing WBs. This method operates on the dual problem and integrates a separation oracle.
We recall that \cite{Altschuler_Boix-Adsera_2020} handles the dual of~\eqref{HugeLP} by employing a geometry-based separation oracle. Once the dual is solved, the method recovers a primal vertex, yielding thus a sparse WB.
%through the computation of a WB on a reduced support identified while solving the dual problem.%, thus providing a sparse solution to the LP. 
As a result, the right-hand side image in~\Cref{fig:Altschuler_vs_MAM} is sharp. Such an exact WB is sharper than the point provided by MAM after 9 hours.
%
%alleviates the LP exponential growth in the number of variables by solving the dual problem (having a polynomial number of variables) using a tailor-made computational geometry-based separation oracle. While solving the dual problem, the separation oracle allows to identify a set % of polynomial size 
%on which a tractable WBs problem is solved. The vertex solution of this WB problem is a sparse solution of the initial primal problem.
%
\begin{figure}[h] \label{fig:Altschuler_vs_MAM}
    \centering
    \includegraphics[width=1\textwidth]{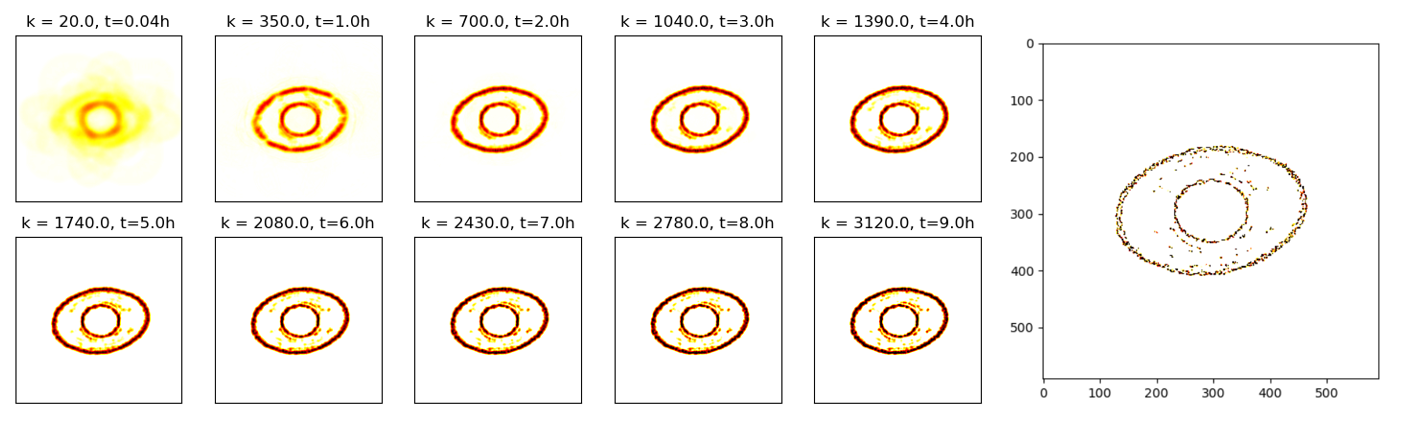}
    \caption{Evolution of the approximated MAM barycenter with time in regards with the exact barycenter of the Altschuler and Bois-Adsera algorithm computed in 4 hours \cite{Altschuler_Boix-Adsera_2020}. } 
\end{figure}
Despite the visual differences, the point provided by MAM is a Wasserstein barycenter. To see this, we compare the values of the objective function in~\eqref{HugeLP}, i.e., the 
Wassersein barycentric distance. 
%
%The computed barycenters seem different but their exact Wassersein barycentric distance are very close: 
The exact solution of the method in \cite{Altschuler_Boix-Adsera_2020} has a barycentric distance of $0.2666$.
%(computed similarly to \cref{IBP_vs_MAM}, using Gurobi). 
After 1 hour of processing, our method had a barycenter distance of 0.2702, which improved to 0.2667 after 3.5 hours, when the solver \cite{Altschuler_Boix-Adsera_2020} halts. The slight visual difference stems from the fact that \cite{Altschuler_Boix-Adsera_2020} finds a vertex solution to WB problem while MAM does not. 

\Cref{fig:evol_pi_hat} illustrates MAM's iterative process. %that it is possible to monitor an approximate barycentric distance with the MAM algorithm. 
Let $\hat\pi^{(m),k}$ be the $m^{th}$ transportation plan computed by MAM at iteration $k$ (see Line~\ref{line:proj} of Algorithm~\ref{alg}). Note that $\hat W_2^2(p^k) := \sum_{m=1}^M\langle c^{(m)},\hat\pi^{(m),k} \rangle$  is an approximation to
$\bar W_2^2(p^k) := \sum_{m=1}^M W_2^2(\mu^k,\nu^{(m)}) $,  the exact function value (barycentric distance) at iteration $k$.
%
%The function value (barycentric distance) at iteration $k$ can be estimated with $\hat W_2^2(p^k) := \sum_{m=1}^M\langle c^{(m)},\hat\pi^{(m),k} \rangle$ over the iterations which is an approximation of $\bar W_2^2(p^k) := \sum_{m=1}^M W_2^2(\mu,\nu^{(m)}) $, the exact barycentric distance obtained by solving a large LP that is computationally demanding and sometimes untractable. 
%Indeed $\hat W_2^2(p^k)$ uses $\hat\pi^{(m),k}$ that refers to the Douglas-Rachford algorithm leveraged in \cref{DR}: 
Furthermore, as it can be seen from the Douglas-Rachford algorithm in \cref{DR}, the transport plans $\hat\pi^{(m),k}$ do not necessarily respect the constraint embodied by $\L$ and is thus infeasible to the WB problem~\eqref{HugeLP}. 
Thus, the approximate value $\hat W_2^2(p^k) $ has to be seen in perspective with the distance of $\hat \pi^k$ to $\L$, i.e., ${\tt dist}_\L(\hat\pi^k)$. 
%This evaluation method can be used as a stopping criterion for the \emph{MAM} algorithm. 
\Cref{fig:evol_pi_hat} shows the evolution of the 
approximate barycentric distance $\hat W_2^2(p^k) $, infeasibility measure ${\tt dist}_\L(\hat\pi^k)$, exact  barycentric distance $\bar W_2^2(p^k)$, and optimal value $\bar W_2^2(p_{exact}) = 0.2666$. 
%
%The exact barycentric distance $\bar W_2^2(p^k)$ is also illustrated in \cref{fig:evol_pi_hat}, and shows that $MAM$ is fast in the vicinity of an exact solution.
%
\begin{figure}[h] \label{fig:evol_pi_hat}
    \centering
    \includegraphics[width=.8\textwidth]{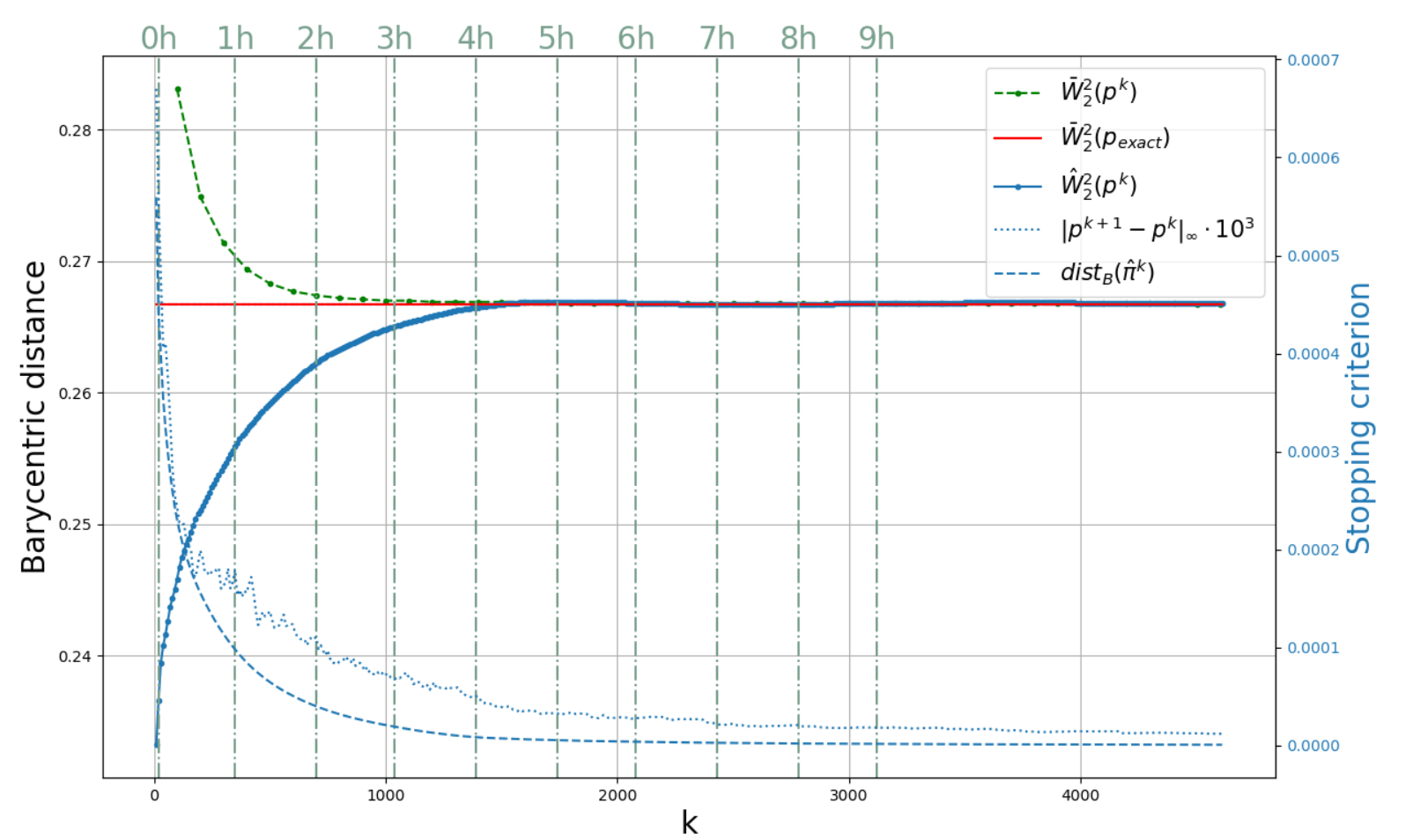}
    \caption{Evolution of the approximated Wasserstein barycenter distance $\hat W_2^2(p^k)$ with iterations (k) and time. } 
\end{figure}
We emphasize that $\bar W_2^2(p^k)$ is computed (by Gurobi) after terminating MAM, while $\hat W_2^2(p^k) $ and ${\tt dist}_\L(\hat\pi^k)$ are computed along the iterative process. After 3.5 hours, MAM provides $\bar W_2^2(p^k)=0.2667$,
$\hat W_2^2(p^k)=0.2658$ and ${\tt dist}_\L(\hat\pi^k) =1.27\cdot 10^{-5}$.
%, asserting that the computed point is a WB of good quality.

Because of its structure, the algorithm of \cite{Altschuler_Boix-Adsera_2020} cannot provide intermediary approximations of the barycenters, which is a disadvantage of the method over MAM. 
As an example, we consider $M=10$ images $40 \times 40$ presented in \Cref{quantitative_section}. Although smaller, these images are much denser than the ones in \Cref{fig_dataset_altsch} and thus the WB problem is more complicated. While MAM can provide \emph{free-support} WB approximations all along its iterative process, the solver of \cite{Altschuler_Boix-Adsera_2020} could not provide a solution after 50 hours of processing.

%Therefore, for more complicated (for example denser) problems, like the ones presented in \cref{quantitative_section} and \cref{infl_support}, while MAM was able to provide \emph{free- support} solutions all along its computation, the solver of \cite{Altschuler_Boix-Adsera_2020} could not provide a solution after more than 50 hours of processing .

% \begin{figure}[h] \label{fig:distB}
%     \centering
%     \includegraphics[width=0.8\textwidth]{}
%     \caption{Evolution of the quality $\textnormal{dist}(\L)$ of the approximated Wasserstein barycenter distance. } 
% \end{figure}
% The criterion that evaluates the error is noted $\textnormal{dist}(\L)$ and is defined as:
% \begin{equation}
%     \textnormal{dist}(\L) = \sqrt{\sum_{m=1}^M \frac{\norm{\hat p^k- \hat p^{(m),k}}^2}{S^{(m)}}}
% \end{equation}
% With,
% \begin{subequations}
% \begin{equation}
%     \hat p^{(m),k}_r :=  \sum_{s=1}^{S^{(m)}}\hat \pi_{rs}^{(m),k}, r=1,\ldots,R
% \end{equation}
% \begin{equation}
%     \hat p^k :=  \sum_{m=1}^M \hat a_m p^{(m),k}
% \end{equation}
% \end{subequations}

}

\if{ % following results are not worht presenting...
when the \emph{free-support} approach is taken, computing a WB amounts to solve~\eqref{HugeLP} by considering all points $\xi \in \Xi$, thus yielding an LP of astronomical size. 
\Cref{tab:MAM-random} compares MAM with three random variants in a serial environment with only one processor. To this end, we have fixed the CPU time for the random variants to the values of \cref{tab:MAMxBADMM}: the first column in \cref{tab:MAM-random} is the last one in \cref{tab:MAMxBADMM}, and the last column in \cref{tab:MAMxBADMM} is the third one in \cref{tab:MAMxBADMM}.
For instance, solver MAM-50 is the random variant of \cref{alg} with $50$ measures randomly chosen at every iteration.

\begin{table}[htb]
\centering
  \caption{Comparison of MAM (deterministic) and three randomized variants on the dataset of \cite{B-ADMM}. }
  \label{tab:MAM-random}
\begin{tabular}{|c|ccccl|}
\hline
\multirow{2}{*}{Seconds} & \multicolumn{5}{c|}{Objective value}                                                                                                                \\ \cline{2-6} 
                         & \multicolumn{1}{c|}{MAM-5} & \multicolumn{1}{c|}{MAM-50} & \multicolumn{1}{c|}{MAM-100} & \multicolumn{1}{c|}{MAM-500} & MAM                        \\ \hline
1.1                      & \multicolumn{1}{c|}{723.2} & \multicolumn{1}{c|}{716.9} & \multicolumn{1}{c|}{716.5}   & \multicolumn{1}{c|}{716.6}   & 716.7                      \\ \hline
2.2                      & \multicolumn{1}{c|}{716.4} & \multicolumn{1}{c|}{713.9}  & \multicolumn{1}{c|}{714.4}   & \multicolumn{1}{c|}{714.1}   & 714.1                      \\ \hline
5.4                      & \multicolumn{1}{c|}{714.0} & \multicolumn{1}{c|}{713.3}  & \multicolumn{1}{c|}{713.3}   & \multicolumn{1}{c|}{713.2}   & 713.3                      \\ \hline
10.8                     & \multicolumn{1}{c|}{713.3} & \multicolumn{1}{c|}{712.9}  & \multicolumn{1}{c|}{712.9}   & \multicolumn{1}{c|}{712.9}   & 712.9                      \\ \hline
16.2                     & \multicolumn{1}{c|}{713.0} & \multicolumn{1}{c|}{712.8}  & \multicolumn{1}{c|}{712.9}   & \multicolumn{1}{c|}{712.8}   & \multicolumn{1}{c|}{712.8} \\ \hline
21.2                     & \multicolumn{1}{c|}{713.0} & \multicolumn{1}{c|}{712.8}  & \multicolumn{1}{c|}{712.8}   & \multicolumn{1}{c|}{712.8}   & \multicolumn{1}{c|}{712.8} \\ \hline
27.1                     & \multicolumn{1}{c|}{712.9} & \multicolumn{1}{c|}{712.8}  & \multicolumn{1}{c|}{712.8}   & \multicolumn{1}{c|}{712.8}   & \multicolumn{1}{c|}{712.8} \\ \hline
32.4                     & \multicolumn{1}{c|}{712.8} & \multicolumn{1}{c|}{712.8}  & \multicolumn{1}{c|}{712.7}   & \multicolumn{1}{c|}{712.7}   & \multicolumn{1}{c|}{712.7} \\ \hline
\end{tabular}
\end{table}
In same cases, the randomized variant performs slightly better. For instance, the variants MAM-50 and MAM-100 manage to reach the optimal value faster than the deterministic variant: see the penultimate line in \cref{tab:MAM-random}. 
}\fi

\subsection{Unbalanced Wasserstein Barycenter} \label{sec:resul-UWB}
% \wlo{Est-il necessaire de motivier une fois de plus le cas unbalanced, peut être que j'en dis trop, je voulais motiver dans le cadre applicatif ?}
% It is possible to normalize images, such as by subtracting the mean and dividing by the standard deviation (we only divide by the sum of  the pixels values in our studies), to obtain images with similar statistical properties. However, in some cases, it may not be appropriate to normalize the images or it may not be sufficient to obtain images that are visually similar. \\
% For example, consider a set of images that have different levels of contrast or brightness. Normalizing these images may not be sufficient to make them visually similar, as the differences in contrast and brightness will still be present in the normalized images. In this case, the UWB can be used to compute a barycenter image that represents the set of images, taking into account the differences in contrast and brightness. \\
% Furthermore, the UWB can be used to generate new images that are representative of the set, but with specific characteristics. For example, one can compute the UWB of a set of blurred images with different levels of blur, and then generate new images with a specific level of blur by interpolating between the blurred images and the UWB. This can be useful in applications such as deblurring or image restoration. \\
% Overall, while normalizing images can be effective in some cases, the UWB provides a more general framework for computing representative images of a set with varying characteristics.
% \\

This section treats a particular example to illustrate the interest in using UWB. The artificial dataset is composed of 50 images with resolution $80\times80$. Each image is divided into four squares. The top left, bottom left, and bottom right squares are randomly filled with double nested ellipses and the top right square is always empty as exemplified in \cref{dataset_unbalanced}. In this example, every image is normalized to depict a probability measure so that we can compare \rev{(fixed-support)} WB and UWB.
\begin{figure}[h] \label{dataset_unbalanced}
    \centering
    \includegraphics[width=.7\textwidth]{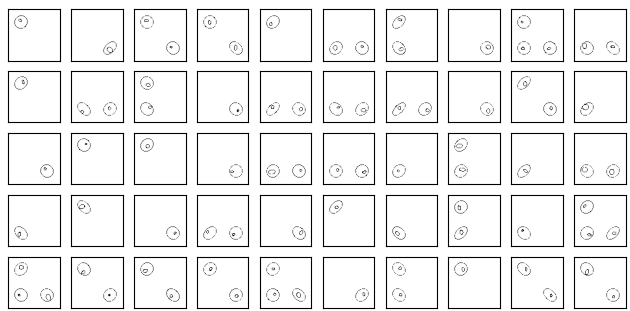}
    \caption{Dataset composed of 50 pictures with nested ellipses randomly positioned in the top left, bottom right, and left corners.}
\end{figure}

With respect to \cref{UWB}, one set of constraints is relaxed and the influence of the hyperparameter $\gamma$ is studied. If $\gamma$ is large enough (i.e. greater than $\norm{{\tt vec}(c)} \approx 1000$, see \cref{prop_gamma}), the problem boils down to the standard WB problem since the example deals with probability measures: the resulting UWB is indeed a WB.  When decreasing $\gamma$ the transportation costs take more importance than the distance to $\L$ which is more and more relaxed. Therefore, as illustrated by \cref{res_gamma}, the resulting UWB splits the image into four parts, giving visual meaning to the \rev{fixed-support} barycenter.

\begin{figure}[h] \label{res_gamma}
    \centering
    \includegraphics[width=1.\textwidth]{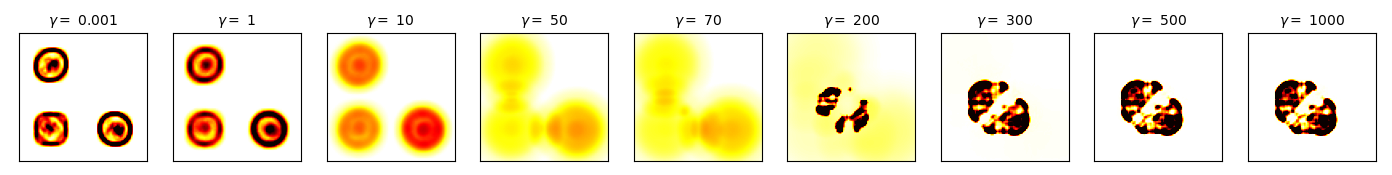}
    \caption{UWB computed with MAM for different values of $\gamma$.}
\end{figure}

In the same vein, \cref{UWB_ill_MAM} provides an illustrative application of MAM for computing UWB in another dataset.

\begin{figure}[h] \label{UWB_ill_MAM}
    \centering
    \includegraphics[width=.9\textwidth]{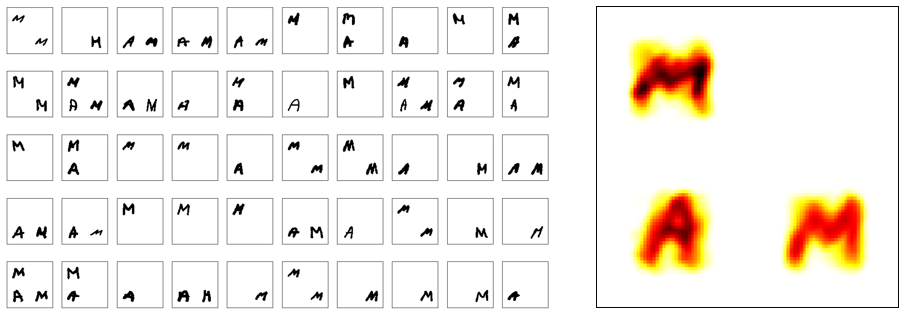}
    \caption{\textit{(left)} UWB for a dataset of letters M-A-M built in the same logic than \cref{dataset_unbalanced} with 50 figures: \textit{(right)} resulting UWB with $\gamma=0.01$, computed in 200 seconds using 10 processors.} %en vérité 1000s mais au bout de 200 secondes on a deja ce résultat visuel
\end{figure}

\bibliographystyle{plain}
\bibliography{article}

\newpage
\appendix

\end{document}